\numberwithin{equation}{section}
\theoremstyle{plain}
\newtheorem{thm}{Theorem}[section]
\newtheorem{lem}[thm]{Lemma}
\newtheorem{prop}[thm]{Proposition}
\theoremstyle{definition}
\newtheorem{defn}[thm]{Definition}
\newtheorem{rem}[thm]{Remark}
\newtheorem{?}[thm]{Problem}
\theoremstyle{definition}
\newtheorem*{nt*}{Notation}
\theoremstyle{plain}
\newtheorem{asu}{Assumption}[section]
\newcommand{\inner}[2]{\left\langle {#1}, {#2} \right\rangle}
\newcommand {\R} {\mathbb R}
\newcommand {\Z} {\mathbb Z}
\newcommand {\calh} {\mathcal H}
\newcommand {\dla} {\Delta}
\newcommand {\tap} {\text{Sol}(U,\Omega)}
\begin{document}

\title[]{ Second-order dynamical systems with a smoothing effect for solving paramonotone variational inequalities}

\author{Pham Viet Hai and Trinh Ngoc Hai}
\address[]{Faculty of Mathematics and Informatics, Hanoi University of Science and Technology, 1 Dai Co Viet, Hanoi, Vietnam.}%
\email{hai.phamviet@hust.edu.vn; hai.trinhngoc@hust.edu.vn}

\subjclass{47H05; 65K15; 90C25}

 \keywords{paramonotone variational inequalities; second-order dynamical systems; accelerated algorithms}

\begin{abstract}
In this paper, we propose a second-order dynamical system with a smoothing effect for solving paramonotone variational inequalities. Under standard assumptions, we prove that the trajectories of this dynamical system converges to a  solution of the variational inequality problem. A time discretization of this dynamical system provides an iterative inertial projection-type method. Our result generalizes and improves the existing results. Some numerical examples are given to confirm the theoretical results and illustrate the effectiveness of the proposed algorithms.
\end{abstract}

\maketitle
\section{Introduction}
\subsection{Variational inequality}
Let $\Omega\subset\R^d$ be a nonempty, closed, convex subset and let $U:\Omega\to\Omega$ be an operator. The variational inequality (VI) of the operator $U$ on $\Omega$ is
\begin{gather}\label{proVI} \tag{VI($U,\Omega$)}
    \text{to find $x_\star\in\Omega$ such that $\inner{U(x_\star)}{a-x_\star}\geq 0\quad\forall a\in\Omega$.}
\end{gather}
The solution set of the problem above is denoted as $\tap$. This problem plays a central role in optimization and has been in-depth investigated in the literature \cite{Bao,CensorGibali,Cruz,Iiduka2,Iiduka1,BelloIusem,HaiOPTL,KhanhVuong,Kinderlehrer,MalitskySemenov,SolodovSvaiter}.

When $U$ is Lipschitz continuous and satisfies some monotonicity condition, numerous algorithms have been introduced for solving \ref{proVI} \cite{AnhVinhDOI,CensorGibali,Facchinei,Quoc1} However, if $U$ is  not Lipschitz continuous, there are very few  methods for solving this problem. In this work, we consider problem \ref{proVI} when $U$ is paramonotone but is not Lipschitz continuous. Under such an assumption, Hai \cite{HaiDS}  proposed the first order dynamical system
\begin{gather}
\begin{cases}\label{hai26}
    x'(t)=\delta(t)\left\{P_{\Omega}\left(x(t)-\frac{\alpha(t)}{\max\{1,\|U(x(t))\|\}}U(x(t))\right)-x(t)\right\},\\
    x(t_0)=x_0\in\Omega,
\end{cases}
\end{gather}
where $\alpha,\delta:[0,\infty)\to (0,\infty)$. Assuming  $\int_0^\infty \alpha (t)dt =\infty $ and $\int_0^\infty \alpha(t)^2dt <\infty$, the author proved that $x(t)$ converges to a solution of \eqref{proVI}.
When the operator $U$ is strongly pseudo-monotone and Lipschitz continuous, Vuong \cite{VuongSIAM} proposes the second order dynamical system
\begin{gather}\label{202402122110}
\begin{cases}
    x''(t)+\alpha_1(t)x'(t)=\delta(t)\left\{P_{\Omega}\left(x(t)-\alpha_0U(x(t))\right)-x(t)\right\},\\
    x(t_0)=x_0\in\calh,\quad x'(t_0)=x_1\in\calh,
\end{cases}    
\end{gather}
where $\alpha_0,\alpha_1,\delta:[0,\infty)\to (0,\infty)$. A finite-difference scheme for \eqref{202402122110} with respect to the time variable $t$, gives rise to the following
iterative scheme:
\begin{gather*}
\begin{cases}
    y_k=x_k+\theta(x_k-x_{k-1}),\\
    x_{k+1}=(1-\rho)y_k+\rho P_{\Omega}(y_k-\lambda Uy_k).
\end{cases}
\end{gather*}
The second order dynamical systems \eqref{202402122110}  has a close connection with
the heavy-ball method and numerical methods with inertial effect. However, it requires more strict conditions than  \eqref{hai26} does. 
As a natural extension, it is interesting to study possibility of applying the second order dynamical system to solve paramonotone variational inequalities. This is the main aim of our paper. To do this, we propose the following dynamical system
\begin{gather}\label{hai27}
    \begin{cases}
    y(t)\triangleq P_\Omega\left(x(t)+\lambda(t)x'(t)-\frac{\alpha_0(t)}{\max\{1,\|U(x(t)+\lambda(t)x'(t))\|\}}U(x(t)+\lambda(t)x'(t))\right),\\
        x''(t)+\alpha_1(t)x'(t)=\delta(t)[y(t)-x(t)],
    \end{cases}
\end{gather}
where $\alpha_0,\alpha_1,\delta:[0,\infty)\to (0,\infty)$ and $\lambda:[0,\infty)\to [0,\infty)$. We not only improve \eqref{hai26} to second order setting, but also smooth it out. Namely, motivated by the idea in \cite{Alecsa}, we add a damping term $\lambda (t) x'(t)$ to the argument of the projection. According to \cite{Alecsa}, this term annihilates the oscillations for the errors $\|x(t)-x_\star\|$, where $x_\star$ is a solution of \eqref{proVI}. Under the assumption that $U$ is paramonotone and continuous, we prove the convergence of trajectories generated by \eqref{hai27}.  It is worth mentioning that, to obtain the convergence of \eqref{hai27}, we need to prove that trajectories of this dynamical system do not leave set $\Omega$, which is an interesting result itself.

Further,  by discretizing the dynamical system \eqref{hai27}, we obtain a novel iterative algorithms with an  inertial effect. The convergence of this algorithm is established under the similar assumptions as that of \eqref{hai27}. 

\subsection{Related works}
There is a growing interest in using ordinary differential equations (ODEs) to design algorithms for optimization problems. The earliest appearance of ODEs in this research direction may be the continuous gradient method
\begin{gather*}
    z'(t)=-\nabla f(z(t))
\end{gather*}
in connection with the unconstrained minimization problem of the differentiable convex function $f$. In order to enrich the methods solving, researchers have increased the order of ODEs with the desire that equations obtained exhibit a better performance. Polyak \cite{Polyak} proposed the Heavy Ball with Friction method to accelerate the gradient descent method. Polyak's method regards an inertial system with a fixed viscous damping coefficient 
\begin{equation} \label{HB}
    z''(t) +\delta z'(t)=-\nabla f (z(t)).
\end{equation}
The system \eqref{HB} was extended for constrained optimization as well as co-coercive operator (see \cite{AA}). Later, Bo\c t and Csetnek \cite{Bot} introduced the second order dynamical system enhanced by the variable viscous damping coefficient 
$$
z''(t) +\delta(t)z'(t)=-\theta(t)A(z(t)),
$$
where the operator $A$ is co-coercive in a real Hilbert space. One finding in \cite{Bot} states that it was possible to prove a weak convergence to the zero of the operator $A$. These works have motivated the study of dynamical systems for solving monotone inclusions and optimization problems (\cite{Alecsa, zbMATH06522738, ACR, zbMATH07194541}). The survey \cite{zbMATH07344788} is a good source providing the recent progresses in this research direction.

\subsection{Organization of the paper}
The remainder of this paper is structured as follows. Section 2 collects some  concepts and results which
will be frequently used in this paper.  In Section 3, we prove the convergence of the dynamical system in continuous time. Section 4 is devoted to convergence of an inertial iterative projection-type algorithm, obtained via discretization of the corresponding dynamical system. Section \ref{sec-exam} gives some numerical examples to confirm the theoretical results and illustrate the effectiveness of the proposed algorithms.

\section{Preliminaries}
Let $X_{\geq a}\triangleq\{t\in X:t\geq a\}$ and $X_{>a}\triangleq\{t\in X:t>a\}$, where $X\subseteq\R$. For $p\geq 1$, the space $L^p$ consists of functions that are $p$-power Lebesgue integrable over $\R_{\geq 0}$. $L^\infty$ includes functions that are essentially bounded measurable over $\R_{\geq 0}$. The space $\ell^p$ is a type of sequence space that consist of sequences of numbers that are $p$-power summable over $\Z_{\geq 0}$. The space $\ell^\infty$ contains sequences of numbers that are bounded over $\Z_{\geq 0}$. 

\subsection{Monotone operators}
For the purpose of the paper, it is to recall some terminologies.
\begin{defn}
    An operator $U:\Omega\subseteq\calh\to\calh$ is called
    \begin{enumerate}
        \item \emph{monotone} if
        \begin{gather*}
            \inner{U(a)-U(b)}{a-b}\geq 0\quad\forall a,b\in\Omega.
        \end{gather*}
        \item \emph{paramonotone} if it is monotone and for every $a,b\in\Omega$ we have
        \begin{gather*}
            \inner{U(a)-U(b)}{a-b}=0\iff U(a)=U(b).
        \end{gather*}
    \end{enumerate}
\end{defn}

Lemma \ref{lem-tap} helps us determine when an element belongs to the solution set $\tap$.
\begin{lem}[\cite{HaiDS}]\label{lem-tap}
    Let $U:\Omega\to\Omega$ be a paramonotone operator. For every $a\in\tap$ and for every $b\in\Omega$, it holds that
    \begin{gather*}
        \inner{U(b)}{b-a}=0\Longrightarrow b\in\tap.
    \end{gather*}
\end{lem}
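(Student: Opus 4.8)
The plan is to exploit the hypothesis $\inner{U(b)}{b-a}=0$ together with monotonicity to force equality in the monotonicity inequality, and then invoke paramonotonicity to conclude $U(a)=U(b)$; once the two operator values coincide, membership of $b$ in $\tap$ follows directly from that of $a$.

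First I would record what the two hypotheses give. Since $a\in\tap$, specializing the defining inequality of \ref{proVI} to the admissible point $b\in\Omega$ yields $\inner{U(a)}{b-a}\geq 0$. On the other hand, monotonicity of $U$ gives $\inner{U(a)-U(b)}{a-b}\geq 0$, equivalently $\inner{U(a)-U(b)}{b-a}\leq 0$. Using the assumption $\inner{U(b)}{b-a}=0$, the left-hand side of the latter simplifies to $\inner{U(a)}{b-a}$, so we obtain $\inner{U(a)}{b-a}\leq 0$. Combined with the first inequality, this squeezes the quantity to $\inner{U(a)}{b-a}=0$, and hence $\inner{U(a)-U(b)}{a-b}=0$.

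Next I would apply paramonotonicity: the vanishing of $\inner{U(a)-U(b)}{a-b}$ yields the operator identity $U(a)=U(b)$. To conclude $b\in\tap$ I must verify $\inner{U(b)}{x-b}\geq 0$ for every $x\in\Omega$. Splitting $\inner{U(b)}{x-b}=\inner{U(b)}{x-a}+\inner{U(b)}{a-b}$, I replace $U(b)$ by $U(a)$ in the first term (they are now equal) and use $\inner{U(b)}{a-b}=-\inner{U(b)}{b-a}=0$ in the second. This leaves $\inner{U(b)}{x-b}=\inner{U(a)}{x-a}\geq 0$, where the inequality holds because $a$ solves \ref{proVI}; this is precisely the statement $b\in\tap$.

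The computation is entirely elementary, so I do not expect a genuine obstacle. The only substantive step is the passage from the scalar equality $\inner{U(a)-U(b)}{a-b}=0$ to the operator identity $U(a)=U(b)$, which is exactly where paramonotonicity (rather than mere monotonicity) is indispensable; everything else is bookkeeping with inner products.
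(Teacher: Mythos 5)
Your proof is correct. The paper itself gives no proof of this lemma, importing it from the cited reference \cite{HaiDS}, but your argument --- squeezing $\inner{U(a)}{b-a}$ to zero from the VI inequality and monotonicity, invoking paramonotonicity to get $U(a)=U(b)$, and then transferring the VI inequality from $a$ to $b$ --- is exactly the standard proof of this fact and is complete.
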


Throughout the paper, we study Problem \ref{proVI} under the following assumption.
\begin{asu}\label{asu-U}
    Assume that (i) $\Omega$ is a nonempty, closed, convex subset of $\R^d$; (ii) the set $\tap\ne\emptyset$; (iii) the operator $U$ is paramonotone and continuous on  $\Omega$.
\end{asu}

\subsection{Ordinary differential equations}
The following lemmas can be found in the paper \cite{Abbas}.
\begin{lem}[{\cite{Abbas}}]\label{lem-main-0}
Suppose that $u:\R_{\geq 0}\to\R$ is locally absolutely continuous and bounded below and $w\in L^1(\R_{\geq 0})$. If for almost every $t\in\R_{\geq 0}$
\begin{gather*}
   u'(t)\leq w(t),
\end{gather*}
then there exists $\lim\limits_{t\to\infty}u(t)$.
\end{lem}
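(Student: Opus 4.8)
The plan is to reduce the statement to the elementary fact that a non-increasing function which is bounded below has a limit at infinity. The key device is to absorb the integrable part of the bound into an auxiliary function. Concretely, I would introduce $W(t)\triangleq\int_0^t w(s)\,ds$ and set $v(t)\triangleq u(t)-W(t)$. Since $w\in L^1(\R_{\geq 0})$, the indefinite integral $W$ is well defined and absolutely continuous on every bounded interval, hence locally absolutely continuous; as a difference of two locally absolutely continuous functions, $v$ is locally absolutely continuous as well.

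Next I would argue that $v$ is non-increasing. By the fundamental theorem of calculus for absolutely continuous functions, $v'(t)=u'(t)-w(t)$ for almost every $t$, so the hypothesis $u'(t)\leq w(t)$ yields $v'(t)\leq 0$ a.e. Then for any $t_1<t_2$ one has $v(t_2)-v(t_1)=\int_{t_1}^{t_2}v'(s)\,ds\leq 0$, which shows $v$ is non-increasing on $\R_{\geq 0}$.

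I would then check that $v$ is bounded below. Because $w\in L^1(\R_{\geq 0})$, we have $|W(t)|\leq\int_0^\infty|w(s)|\,ds=:M<\infty$ for every $t$, so $W$ is bounded. Combining this with the lower bound $u(t)\geq m$ furnished by the assumption that $u$ is bounded below, we obtain $v(t)=u(t)-W(t)\geq m-M$ for all $t$. A non-increasing function that is bounded below converges, so $\lim_{t\to\infty}v(t)$ exists and is finite.

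Finally, since $w\in L^1(\R_{\geq 0})$ the limit $\lim_{t\to\infty}W(t)=\int_0^\infty w(s)\,ds$ also exists and is finite, whence $\lim_{t\to\infty}u(t)=\lim_{t\to\infty}\bigl(v(t)+W(t)\bigr)$ exists, which is the claim. The argument is essentially routine; the only step demanding a little care is the passage from the almost-everywhere differential inequality $v'\leq 0$ to the genuine monotonicity of $v$, which rests on $v$ being locally absolutely continuous (so that it recovers itself as the integral of its derivative) rather than merely differentiable almost everywhere.
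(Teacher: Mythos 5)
Your proof is correct and complete. Note that the paper itself does not prove this lemma; it is quoted from the reference \cite{Abbas}, and your argument --- absorbing the integrable majorant into $W(t)=\int_0^t w(s)\,ds$, observing that $v=u-W$ is locally absolutely continuous, non-increasing (via the fundamental theorem of calculus for absolutely continuous functions) and bounded below, hence convergent, and then adding back the convergent term $W$ --- is precisely the standard proof given there. You are also right to flag that the passage from $v'\leq 0$ a.e.\ to monotonicity genuinely requires absolute continuity rather than mere a.e.\ differentiability; that is the only delicate point and you handle it correctly.
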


\begin{lem}[{\cite{Abbas}}]\label{lem-main-1}
Let $1\leq p<\infty$ and $1\leq q\leq\infty$. Let $u\in L^p(\R_{\geq 0})$, $w\in L^q(\R_{\geq 0})$ such that $u:\R_{\geq 0}\to\R_{\geq 0}$ is locally absolutely continuous and $w:\R_{\geq 0}\to\R$. If for almost every $t\in\R_{\geq 0}$
\begin{gather*}
u'(t)\leq w(t),
\end{gather*}
then $\lim\limits_{t\to\infty}u(t)=0$.
\end{lem}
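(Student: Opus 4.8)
The plan is to prove $u(t)\to 0$ by contradiction, exploiting the $L^p$ bound on $u$ and the growth constraint $u'\le w$ in tandem. First I would record two preliminary facts. Since $u$ is locally absolutely continuous it is continuous and satisfies $u(t)-u(s)=\int_s^t u'(\tau)\,d\tau$ for all $0\le s\le t$; combined with $u'\le w$ a.e. this yields $u(t)-u(s)\le\int_s^t w(\tau)\,d\tau\le\int_s^t|w(\tau)|\,d\tau$. Second, because $u\ge 0$ and $u\in L^p$ with $p<\infty$, one has $\liminf_{t\to\infty}u(t)=0$: if instead $u(t)\ge c>0$ on some tail $[T,\infty)$, then $\int_T^\infty u^p=\infty$, contradicting $u\in L^p$.

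Next, suppose for contradiction that $u(t)\not\to 0$, so that $\limsup_{t\to\infty}u(t)>0$. I would fix $\epsilon>0$ with $2\epsilon<\limsup_{t\to\infty}u(t)$ (any $\epsilon>0$ if this $\limsup$ equals $+\infty$). Since $\liminf u=0<\epsilon<2\epsilon<\limsup u$ and $u$ is continuous, the trajectory crosses upward from level $\epsilon$ to level $2\epsilon$ infinitely often as $t\to\infty$. Extracting these crossings yields a sequence of pairwise disjoint intervals $[s_n,\tau_n]$ with $s_n\to\infty$, $u(s_n)=\epsilon$, $u(\tau_n)=2\epsilon$, and $u(t)\ge\epsilon$ for every $t\in[s_n,\tau_n]$.

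The contradiction then comes from estimating the lengths $\tau_n-s_n$ in two ways. On one hand, since the intervals are disjoint and $u\ge\epsilon$ on each of them, $\epsilon^p\sum_n(\tau_n-s_n)\le\sum_n\int_{s_n}^{\tau_n}u^p\le\|u\|_{L^p}^p<\infty$, whence $\tau_n-s_n\to 0$. On the other hand, the growth estimate gives $\epsilon=u(\tau_n)-u(s_n)\le\int_{s_n}^{\tau_n}|w|$. I would bound this last integral according to $q$: for $q=\infty$ it is at most $\|w\|_{L^\infty}(\tau_n-s_n)$, for $1<q<\infty$ H\"older's inequality gives $\|w\|_{L^q}(\tau_n-s_n)^{1-1/q}$, and for $q=1$ it is at most the tail $\int_{s_n}^\infty|w|$. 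In every case the bound tends to $0$ (using $\tau_n-s_n\to 0$ for $q>1$, and $s_n\to\infty$ with $w\in L^1$ for $q=1$), contradicting $\int_{s_n}^{\tau_n}|w|\ge\epsilon>0$. Hence $u(t)\to 0$.

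The main obstacle is the case $q>1$, where $w$ need not be integrable, so the naive argument ``the tail $\int_s^\infty|w|$ is eventually small'' is unavailable. The resolution is precisely the interplay above: the $L^p$-integrability of $u$ forces the upcrossing intervals to shrink, and H\"older's inequality converts this shrinking length into smallness of $\int_{s_n}^{\tau_n}|w|$. The remaining points are routine, though one should take minor care that the crossing intervals can indeed be chosen pairwise disjoint, and that the fundamental theorem of calculus is invoked only through the absolute continuity of $u$.
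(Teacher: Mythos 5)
Your argument is correct and complete. Note, however, that the paper itself offers no proof of this lemma: it is imported verbatim from the reference [Abbas] (Abbas--Attouch--Svaiter), so there is nothing in the paper to compare against line by line. Your upcrossing proof is a legitimate self-contained alternative to the standard argument in the literature, which typically proceeds by averaging: one writes $u(t)\leq \frac{1}{h}\int_{t-h}^{t}u(s)\,ds+\int_{t-h}^{t}|w(\tau)|\,d\tau$ (obtained by integrating $u(t)\le u(s)+\int_s^t w$ over $s\in[t-h,t]$) and then sends $t\to\infty$ after choosing $h$ small, using H\"older on both terms. Both routes rest on the same two pillars --- $u\in L^p$ forces $u$ to be small on average, and $u'\le w$ with $w\in L^q$ prevents $u$ from climbing quickly --- but your version localizes this interplay on the upcrossing intervals $[s_n,\tau_n]$, where the key observation is that $\sum_n(\tau_n-s_n)<\infty$ forces $\tau_n-s_n\to 0$ while $\int_{s_n}^{\tau_n}|w|\ge\epsilon$ stays bounded below; the case split on $q$ (tail smallness for $q=1$, H\"older for $1<q<\infty$, trivial bound for $q=\infty$) is handled correctly, and the construction of pairwise disjoint crossing intervals via the intermediate value theorem is sound since local absolute continuity gives continuity and the validity of $u(t)-u(s)=\int_s^t u'$.
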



Let $\alpha_0,\alpha_1,\delta:[0,\infty)\to (0,\infty)$ and $\lambda:[0,\infty)\to [0,\infty)$. Consider the ordinary differential equation
\begin{gather}\label{202401310822}
    \begin{cases}
        x''(t)+\alpha_1(t)x'(t)=\delta(t)[y(t)-x(t)],\\
        x(t_0)=x_0\in\Omega,\quad x'(t_0)=\frac{1}{4}\alpha_1(t_0) (x_1-x_0), \text{where } x_1\in \Omega.
    \end{cases}
\end{gather}
Here
\begin{gather}
\label{202402250847}
y(t)\triangleq P_\Omega\left(x(t)+\lambda(t)x'(t)-\frac{\alpha_0(t)}{\max\{1,\|U(x(t)+\lambda(t)x'(t))\|\}}U(x(t)+\lambda(t)x'(t))\right).
\end{gather}

The solution of dynamical system \eqref{202401310822} is understood in the following sense.
\begin{defn}
A function $x$ is called a \emph{strong global solution} of equation \eqref{202401310822} if it holds:
\begin{enumerate}
    \item The functions $x,x',x'':[t_0,\infty)\to\calh$ is locally absolutely continuous; in other words, absolutely continuous on each interval $[a,b]$ for $b>a>t_0$.
    \item {\scriptsize$x''(t)+\alpha_1(t)x'(t)=\delta(t)\left\{P_\Omega\left(x(t)+\lambda(t)x'(t)-\frac{\alpha_0(t)}{\max\{1,\|U(x(t)+\lambda(t)x'(t))\|\}}U(x(t)+\lambda(t)x'(t))\right)-x(t)\right\}$} for almost every $t\geq t_0$.
    \item $x(t_0)=x_0\in\Omega,x'(t_0)=\frac{1}{4}\alpha_1(t_0) (x_1-x_0)\in\Omega$.
\end{enumerate}
\end{defn}

\begin{prop}[Equivalent form]
Equation \eqref{202401310822} is equivalent to the system $\omega'(t)=G(t,\omega(t))$, where $G:[t_0,\infty)\times\calh\times\calh\to\calh\times\calh$ is defined by
{\small
\begin{gather*}
    G(t,\omega_1,\omega_2)=\bigg(\omega_2(t),-\alpha_1(t)\omega_2(t)\\
    +\delta(t)\left\{P_\Omega\left(\omega_1(t)+\lambda(t)\omega_2(t)-\frac{\alpha_0(t)}{\max\{1,\|U(\omega_1(t)+\lambda(t)\omega_2(t))\|\}}U(\omega_1(t)+\lambda(t)\omega_2(t))\right)-\omega_1(t)\right\}\bigg),
\end{gather*}
}
where $\omega=(\omega_1,\omega_2)\in\calh\times\calh.$
\end{prop}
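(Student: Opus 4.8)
The plan is to verify the claimed equivalence directly via the standard order-reduction substitution, checking both implications together with the matching of initial data. First I would introduce $\omega_1\triangleq x$ and $\omega_2\triangleq x'$, so that $\omega=(\omega_1,\omega_2)$ takes values in $\calh\times\calh$. With this identification the first coordinate equation reads $\omega_1'(t)=x'(t)=\omega_2(t)$, which is exactly the first component of $G(t,\omega(t))$. For the second coordinate I would substitute $x''=\omega_2'$ into \eqref{202401310822} and isolate $x''$, obtaining $\omega_2'(t)=-\alpha_1(t)\omega_2(t)+\delta(t)[y(t)-x(t)]$; since $x(t)=\omega_1(t)$ and $x'(t)=\omega_2(t)$, the quantity $y(t)$ from \eqref{202402250847} coincides termwise with the projection expression appearing in the second component of $G$. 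Hence $\omega'(t)=G(t,\omega(t))$, establishing the forward direction.

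For the converse I would start from a solution $\omega=(\omega_1,\omega_2)$ of $\omega'=G(t,\omega)$, set $x\triangleq\omega_1$, and read off from the first component that $x'=\omega_1'=\omega_2$. Differentiating once more and using the second component of $G$ gives $x''=\omega_2'=-\alpha_1(t)x'+\delta(t)[y(t)-x(t)]$, which is precisely \eqref{202401310822}; here one uses again that $\omega_1+\lambda\omega_2=x+\lambda x'$, so that the projection argument in $G$ reproduces \eqref{202402250847}. The regularity requirements transfer without difficulty: local absolute continuity of $\omega$ is equivalent to that of $x$ and $x'$, and the extra smoothness of $x''$ built into the definition of a strong global solution corresponds to local absolute continuity of $\omega_2'$, while the almost-everywhere validity of the first-order system matches the almost-everywhere validity of the second-order equation.

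Finally I would match the Cauchy data. The initial condition $\omega(t_0)=(\omega_1(t_0),\omega_2(t_0))$ translates into $x(t_0)=\omega_1(t_0)=x_0$ and $x'(t_0)=\omega_2(t_0)=\tfrac14\alpha_1(t_0)(x_1-x_0)$, which are exactly the data prescribed in \eqref{202401310822}. Since both the differential relations and the initial conditions correspond bijectively under the substitution $\omega=(x,x')$, the two formulations possess the same solutions, and the equivalence follows.

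There is no serious obstacle in this argument; it is the routine reduction of a second-order equation to a first-order system. The only point requiring a little care is that the right-hand side carries the nonsmooth, nonlinear projection term $P_\Omega$ together with the normalization $\max\{1,\|U(\cdot)\|\}$, so one should confirm that this term is genuinely unchanged under the substitution (it depends on $x+\lambda x'=\omega_1+\lambda\omega_2$ alone) and that the solution concept used, namely locally absolutely continuous $x,x',x''$ with the equation holding almost everywhere, is faithfully preserved in both directions.
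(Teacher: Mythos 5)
Your proposal is correct and follows exactly the paper's approach: the paper's proof consists of the single remark that the conclusion follows from the change of variables $(\omega_1(t),\omega_2(t))=(x(t),x'(t))$, which is precisely the order-reduction substitution you carry out. You merely spell out in full the two implications, the initial-data matching, and the invariance of the projection term, all of which the paper leaves implicit.
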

\begin{proof}
The conclusion follows from doing the change of variables
\begin{gather*}
    (\omega_1(t),\omega_2(t))=\left(x(t),x'(t)\right).
\end{gather*}
\end{proof}

The proof of the following result is left to the reader as it is similar to \cite{VuongSIAM}.
\begin{prop}[Existence and uniqueness of a strong global solution]
Consider the dynamical system \eqref{202401310822}, where $\alpha_0,\alpha_1,\delta:[t_0,\infty)\to\R_{>0}$ are locally integrable functions in the Lebesgue sense and the operator $U$ is Lipschitz. Then for each $x_0\in\Omega,x_1\in\Omega$, there exists a unique strong global solution of the dynamical system \eqref{202401310822}.    
\end{prop}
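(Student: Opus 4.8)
The plan is to recast \eqref{202401310822} as the first-order system $\omega'(t)=G(t,\omega(t))$ of the preceding proposition and to invoke the Cauchy--Lipschitz (Picard--Lindel\"of) theorem in its Carath\'eodory form, exactly as in \cite{VuongSIAM}. That theorem requires three ingredients: (i) measurability of $t\mapsto G(t,\omega)$ for each fixed $\omega$; (ii) a local Lipschitz estimate $\|G(t,\omega)-G(t,\tilde\omega)\|\le L(t)\|\omega-\tilde\omega\|$ with $L\in L^1_{\mathrm{loc}}$; and (iii) a linear growth bound $\|G(t,\omega)\|\le a(t)+b(t)\|\omega\|$ with $a,b\in L^1_{\mathrm{loc}}$, which upgrades a local solution to a global one through Gr\"onwall's inequality. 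Measurability in $t$ is immediate once it is checked for the data functions $\alpha_0,\alpha_1,\delta,\lambda$ and combined with the continuity of $U$ and $P_\Omega$.

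The crux is the Lipschitz estimate (ii), and within it the Lipschitz continuity of the smoothing term. First I would observe that $w\mapsto\dfrac{w}{\max\{1,\|w\|\}}$ coincides with the metric projection $P_{\bar B}$ onto the closed unit ball $\bar B=\{w:\|w\|\le 1\}$, since it returns $w$ when $\|w\|\le 1$ and $w/\|w\|$ otherwise; hence it is nonexpansive. Composing with the $L_U$-Lipschitz operator $U$ shows that $v\mapsto\dfrac{U(v)}{\max\{1,\|U(v)\|\}}$ is Lipschitz with constant $L_U$. Writing $v=\omega_1+\lambda(t)\omega_2$, using $\|v-\tilde v\|\le\|\omega_1-\tilde\omega_1\|+\lambda(t)\|\omega_2-\tilde\omega_2\|$, and using the nonexpansiveness of $P_\Omega$, I obtain for the $y$-term of $G$ a bound of the form $(1+\alpha_0(t)L_U)(1+\lambda(t))\|\omega-\tilde\omega\|$. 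Collecting the two components of $G$ then yields a Lipschitz constant $L(t)$ assembled from sums and products of $\alpha_1(t),\delta(t),\alpha_0(t),\lambda(t)$.

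For the growth bound (iii), I fix any $p\in\Omega$; since $P_\Omega(p)=p$, $P_\Omega$ is nonexpansive, and $\left\|\alpha_0(t)\dfrac{U(v)}{\max\{1,\|U(v)\|\}}\right\|\le\alpha_0(t)$, I get $\|y(t)-p\|\le\|\omega_1-p\|+\lambda(t)\|\omega_2\|+\alpha_0(t)$, which produces $\|G(t,\omega)\|\le a(t)+b(t)\|\omega\|$ with $a,b$ built from the data. The Carath\'eodory theorem then gives a unique maximal solution $\omega$ on $[t_0,T_{\max})$, and the linear growth together with Gr\"onwall forbids finite-time blow-up, forcing $T_{\max}=\infty$. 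Undoing the change of variables $\omega=(x,x')$ and verifying the initial conditions $x(t_0)=x_0$, $x'(t_0)=\tfrac14\alpha_1(t_0)(x_1-x_0)$ delivers the unique strong global solution.

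The main obstacle I anticipate is ensuring that the assembled coefficients $L(t),a(t),b(t)$ are genuinely in $L^1_{\mathrm{loc}}$: they involve products such as $\delta(t)\alpha_0(t)\lambda(t)$, and a product of two merely locally integrable functions need not be locally integrable. Resolving this cleanly either requires the data functions to be locally bounded (e.g. continuous, as is implicitly used at the discretization stage), so that the products remain locally integrable, or a more careful continuation argument confined to the compact time intervals on which the trajectory is being extended. This is precisely the point at which the details follow \cite{VuongSIAM}, noting that there $\alpha_0$ is constant and the inner damping is absent, so these products do not arise.
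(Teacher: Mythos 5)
Your proposal is correct and follows essentially the route the paper intends: the paper omits the proof entirely, deferring to \cite{VuongSIAM}, and that reference proceeds exactly as you do, via the first-order reformulation $\omega'=G(t,\omega)$, the nonexpansiveness of $P_\Omega$ and of $w\mapsto w/\max\{1,\|w\|\}=P_{\bar B}(w)$, a Carath\'eodory-type local existence theorem, and a linear-growth/Gr\"onwall argument to rule out finite-time blow-up. Your closing caveat about products of merely locally integrable coefficients (e.g. $\delta(t)\alpha_0(t)\lambda(t)$) failing to be locally integrable is a legitimate gap in the stated hypotheses that the paper glosses over, and your proposed fix (local boundedness of the data, which holds in all the parameter choices actually used) is the right one.
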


The following result shows that $x(t)\in\Omega$ for every $t\geq t_0.$

\begin{lem}\label{hai1}
Let $x(t)$ be the solution of the dynamical system 
\begin{equation*}
\begin{cases}
&x({t_0})\in \Omega,\\
&x'(t) =\delta (t) \left( y(t) -x(t) \right),
\end{cases}
\end{equation*}
where $y:[t_0;\infty) \to \Omega$ and $\delta: [t_0;\infty) \to (0;\infty)$ are  continuous functions satisfying $y(t)\in \Omega$ for all $t\geq t_0$.
 Then, we have $x(t)\in \Omega$ for all $t\in [t_0,\infty)$.
\end{lem}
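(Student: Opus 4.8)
The plan is to exploit the fact that this is a first-order \emph{linear} ODE with scalar coefficient $\delta(t)$, solve it explicitly by an integrating factor, and then observe that the resulting formula exhibits $x(t)$ as a continuous convex combination of points already known to lie in $\Omega$. Since $y$ and $\delta$ are continuous, the right-hand side is continuous in $t$, so the solution $x$ is $C^1$ and the following manipulations are valid in the classical sense. First I would rewrite the equation as $x'(t)+\delta(t)x(t)=\delta(t)y(t)$ and multiply by the integrating factor $e^{\Delta(t)}$, where $\Delta(t):=\int_{t_0}^t\delta(s)\,ds$. Then $\bigl(e^{\Delta(t)}x(t)\bigr)'=e^{\Delta(t)}\delta(t)y(t)$, and integrating from $t_0$ to $t$ (using $\Delta(t_0)=0$) yields
\begin{gather*}
x(t)=e^{-\Delta(t)}x(t_0)+\int_{t_0}^t e^{-(\Delta(t)-\Delta(s))}\delta(s)\,y(s)\,ds.
\end{gather*}

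Next I would verify that the coefficients appearing here form a probability measure. The weight $e^{-\Delta(t)}$ on the initial point is nonnegative, and the integrand weight $e^{-(\Delta(t)-\Delta(s))}\delta(s)$ is nonnegative because $\delta>0$. Moreover, since $e^{\Delta(s)}\delta(s)=\frac{d}{ds}e^{\Delta(s)}$, one computes $\int_{t_0}^t e^{-(\Delta(t)-\Delta(s))}\delta(s)\,ds=e^{-\Delta(t)}\bigl(e^{\Delta(t)}-1\bigr)=1-e^{-\Delta(t)}$, so the total mass equals $e^{-\Delta(t)}+\bigl(1-e^{-\Delta(t)}\bigr)=1$. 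Hence $x(t)$ is an average, against a probability measure, of $x(t_0)\in\Omega$ and the values $y(s)\in\Omega$ for $s\in[t_0,t]$.

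The final step is to conclude $x(t)\in\Omega$ from convexity and closedness of $\Omega$, and this is the point I expect to need the most care, since one must pass from a finite convex combination (which trivially stays in $\Omega$) to an integral. I would handle it through the supporting-hyperplane description: for every closed half-space $\{z:\inner{c}{z}\le\beta\}$ containing $\Omega$ we have $\inner{c}{x(t_0)}\le\beta$ and $\inner{c}{y(s)}\le\beta$ for all $s$, so by the probability-measure property $\inner{c}{x(t)}\le\beta$; since $\Omega$ is closed and convex it equals the intersection of all such half-spaces, forcing $x(t)\in\Omega$. (Equivalently, one may approximate the integral by Riemann sums, each of which is a genuine convex combination lying in $\Omega$, and invoke closedness of $\Omega$ to pass to the limit.)

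As an alternative that sidesteps the integral-in-a-convex-set issue altogether, I would note the Lyapunov route: setting $\varphi(t):=\tfrac12\norm{x(t)-P_\Omega(x(t))}^2$, Moreau's theorem gives $\varphi'(t)=\inner{x(t)-P_\Omega(x(t))}{x'(t)}$; substituting $x'(t)=\delta(t)(y(t)-x(t))$, splitting $y(t)-x(t)=(y(t)-P_\Omega(x(t)))-(x(t)-P_\Omega(x(t)))$, and using the projection inequality $\inner{x(t)-P_\Omega(x(t))}{y(t)-P_\Omega(x(t))}\le 0$ (valid since $y(t)\in\Omega$) yields $\varphi'(t)\le-2\delta(t)\varphi(t)$. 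As $\varphi(t_0)=0$ and $\varphi\ge 0$, Gronwall's inequality gives $\varphi\equiv 0$, i.e.\ $x(t)\in\Omega$ for all $t$.
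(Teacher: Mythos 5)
Your main argument is essentially the paper's own proof: both solve the linear ODE by the integrating factor $e^{\int_{t_0}^t\delta}$, obtain the same explicit formula, and read it as a convex combination (total weight $1$) of $x(t_0)$ and the values $y(s)\in\Omega$. The only divergence is in how the integral average is certified to lie in $\Omega$: the paper approximates it by Riemann sums, each a genuine finite convex combination in $\Omega$, and passes to the limit using closedness — exactly the route you mention parenthetically — whereas your primary mechanism is the representation of the closed convex set $\Omega$ as an intersection of closed half-spaces, which is cleaner and avoids the limit of convex combinations altogether. Both are correct. Your Lyapunov alternative, with $\varphi(t)=\tfrac12\,\mathrm{dist}(x(t),\Omega)^2$ and the projection inequality giving $\varphi'\le-2\delta\varphi\le 0$ with $\varphi(t_0)=0$, is a genuinely different and arguably preferable argument: it needs no explicit solution formula, it is insensitive to whether the coefficients are merely locally integrable rather than continuous, and it sidesteps the integral-in-a-convex-set issue entirely (its only prerequisite is differentiability of the squared distance function, with gradient $x-P_\Omega(x)$). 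The one thing to keep in mind when comparing is that the paper deliberately uses the explicit-formula version because the same convex-combination structure is reused in Lemma \ref{lemmahai2} for the second-order system after reducing it to a cascade of two first-order equations; either of your arguments would serve equally well there.
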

\begin{proof}
For simplicity's sake, let $t_0=0$.
It holds that
\begin{equation*}
x'(t)+\delta (t) x(t) =\delta (t)  y(t) .
\end{equation*}
Hence,
\begin{align*}
x(t)= &e^{-\int_0^t\delta (u)du }\left( \int_0^t \delta (s) y(s) e^{\int_0^s\delta (u)du } \text{ds}+ x(0)   \right)\\
=&\left( 1-e^{-\int_0^t\delta (u)du } \right) \frac {1}{e^{\int_0^t\delta (u)du }-1}  \int_0^t \delta (s) y(s) e^{\int_0^s\delta (u)du } \text{ds} + e^{-\int_0^t\delta (u)du } x(0). 
\end{align*}
Since $\Omega$ is convex and $x(0) \in \Omega$, it is sufficient to show that 
\begin{equation*}
 \frac {1}{e^{\int_0^t\delta (u)du }-1}  \int_0^t \delta (s) y(s) e^{\int_0^s\delta (u)du } \text{ds} \in \Omega\quad \forall t>0.
\end{equation*}
We have 
\begin{equation*}
 \frac {1}{e^{\int_0^t\delta (u)du }-1} \int_0^t \delta (s) y(s) e^{\int_0^s\delta (u)du } \text{ds} = \lim_{n\to \infty}\sum_{i=0}^{n-1}\frac {  e ^{ \int_0^ {\frac {it}{n}}\delta (u)du }}{e^{\int_0^t\delta (u)du }-1}    y\left(\frac {it}{n}\right) \delta \left(\frac {it}{n}\right) \frac{t}{n}.
\end{equation*}
Denote $$\xi(t):= \frac{d}{dt} \left(  e ^{ \int_0^t\delta (u)du }  \right)=  e ^{ \int_0^t\delta (u)du } \delta (t).$$
Then,
\begin{equation*}
\lim\limits_{n\to \infty} \sum_{i=0}^{n-1} e ^{ \int_0^ {\frac {it}{n}}\delta (u)du }     \delta \left(\frac {it}{n}\right) \frac{t}{n} 
=\lim\limits_{n\to \infty}  \sum_{i=0}^{n-1} \xi \left(\frac {it}{n}\right) \frac t n= \int_0^t \xi(u) du=e^{\int_0^t\delta (u)du }-1.
\end{equation*}
We found that
\begin{itemize}
	\item[$\bullet$] $ \frac {  e ^{ \int_0^ {\frac {it}{n}}\delta (u)du }}{e^{\int_0^t\delta (u)du }-1}      \delta \left(\frac {it}{n}\right) \frac{t}{n} \in (0;1) $;
	\item[$\bullet$] $ 
	\lim_{n\to \infty} \sum_{i=0}^{n-1}\frac {  e ^{ \int_0^ {\frac {it}{n}}\delta (u)du }}{e^{\int_0^t\delta (u)du }-1}  \delta \left(\frac {it}{n}\right) \frac{t}{n}=1;
	$
	\item[$\bullet$]  $y\left(\frac {it}{n}\right) \in \Omega $;
	\item[$\bullet$]  $\Omega$ is closed and convex.
\end{itemize}
So we have
\begin{equation*}
 \frac {1}{e^{\int_0^t\delta (u)du }-1} \int_0^t \delta (s) y(s) e^{\int_0^s\delta (u)du } \text{ds} = \lim_{n\to \infty}\sum_{i=0}^{n-1}\frac {  e ^{ \int_0^ {\frac {it}{n}}\delta (u)du }}{e^{\int_0^t\delta (u)du }-1}    y\left(\frac {it}{n}\right) \delta \left(\frac {it}{n}\right) \frac{t}{n} \in \Omega.
\end{equation*}
\end{proof}

\begin{lem}\label{lemmahai2}
Consider the dynamical system 
	\begin{equation}\label{syshai2}
		\begin{cases}
			&x(t_0)=x_0\in \Omega,\\
			&x'(t_0)=\frac{1}{4}\alpha_1 (t_0) (x_1-x_0), \text{ where } x_1\in \Omega,\\
			&x''(t)+\alpha_1 (t) x'(t) =\delta (t) \left[ y(t) -x(t) \right].
		\end{cases}
	\end{equation}
 Here, $y:[t_0;\infty) \to \Omega$ and $\delta, \alpha_1: [t_0;\infty) \to (0;\infty)$ are locally absolutely continuous functions satisfying $y(t)\in \Omega$ for all $t\geq t_0$ and
\begin{gather}\label{202403022125}
\delta(t)<\frac{1}{4}(\alpha_1(t)^2+2\alpha_1'(t))\quad\forall t\geq t_0.
\end{gather}
	Then, $x(t)\in \Omega$ for all $t\geq t_0$.
	\end{lem}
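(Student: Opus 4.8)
The plan is to factor the second-order equation \eqref{syshai2} into two first-order equations, each of the precise form handled by Lemma \ref{hai1}, and then to propagate $\Omega$-invariance through them in turn. The device is an integrating factor $b(t)>0$, chosen so that the substitution $\tilde w(t)\triangleq x(t)+\frac{1}{b(t)}x'(t)$ simultaneously decouples from $x$ and lands in the normal form of Lemma \ref{hai1}. Concretely, writing $w=x'+bx=b\tilde w$ and differentiating along \eqref{syshai2} gives
\[
w'=\delta y+(b-\alpha_1)w+(b'-b^2+\alpha_1 b-\delta)x.
\]
The $x$-term vanishes exactly when $b$ solves the Riccati equation $b'=b^2-\alpha_1 b+\delta$; and, remarkably, the same identity $ab+b'=\delta$ (with $a=\alpha_1-b$) forces the equation for $\tilde w=w/b$ to collapse to $\tilde w'=\frac{\delta}{b}(y-\tilde w)$. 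So I would first fix $b$ as the solution of
\[
b'(t)=b(t)^2-\alpha_1(t)b(t)+\delta(t),\qquad b(t_0)=\tfrac12\alpha_1(t_0).
\]

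The crux is to show that this $b$ exists on all of $[t_0,\infty)$ and remains strictly positive; this is where hypothesis \eqref{202403022125} is used, via a barrier argument. On the upper side, \eqref{202403022125} rewrites as $\tfrac12\alpha_1'>(\tfrac12\alpha_1)^2-\alpha_1(\tfrac12\alpha_1)+\delta$, i.e. $\phi\triangleq\tfrac12\alpha_1$ is a strict supersolution of the Riccati equation; since $b(t_0)=\phi(t_0)$, a first-crossing argument yields $b(t)\le\tfrac12\alpha_1(t)$ for all $t\ge t_0$. On the lower side, wherever $b=0$ the right-hand side equals $\delta>0$, so $b$ cannot cross $0$ downward and hence $b(t)>0$. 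Being trapped in $(0,\tfrac12\alpha_1(t)]$, with $\alpha_1$ locally bounded, the solution admits no finite-time blow-up, so $b$ is a global, continuous, strictly positive solution.

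With $b$ in hand the two invariance steps are immediate. Setting $\tilde w=x+\tfrac1b x'$, the computation above gives $\tilde w'=\frac{\delta}{b}(y-\tilde w)$, where $\frac{\delta}{b}>0$ and $y(t)\in\Omega$ are continuous, and the initial value is
\[
\tilde w(t_0)=x_0+\frac{1}{\tfrac12\alpha_1(t_0)}\cdot\tfrac14\alpha_1(t_0)(x_1-x_0)=\tfrac12(x_0+x_1)\in\Omega
\]
by convexity of $\Omega$. Thus Lemma \ref{hai1} gives $\tilde w(t)\in\Omega$ for all $t\ge t_0$. Finally, by construction $x'=b(\tilde w-x)$ with $b>0$ continuous, $\tilde w(\cdot)\in\Omega$ continuous and $x(t_0)=x_0\in\Omega$, so a second application of Lemma \ref{hai1} yields $x(t)\in\Omega$ for all $t\ge t_0$, which is the claim.

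I expect the global existence and positivity of the Riccati solution $b$ to be the only genuine obstacle; once it is secured, the two $\Omega$-invariance conclusions follow directly from Lemma \ref{hai1}. The minor technical points to handle are the regularity of the comparison argument (working with the integral form, since $\alpha_1'$ exists only almost everywhere) and the continuity of the coefficients $\tfrac{\delta}{b}$ and $b$ required to invoke Lemma \ref{hai1}.
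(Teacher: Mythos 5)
Your proof is correct and follows essentially the same route as the paper's: factor the second-order equation into a cascade of two first-order systems via a Riccati integrating factor, show the Riccati solution exists globally and stays in $(0,\tfrac12\alpha_1(t)]$ using \eqref{202403022125} as a barrier, and apply Lemma \ref{hai1} twice. The only (cosmetic) difference is your initial value $b(t_0)=\tfrac12\alpha_1(t_0)$ in place of the paper's $\tfrac14\alpha_1(t_0)$, so your auxiliary variable starts at the midpoint $\tfrac12(x_0+x_1)\in\Omega$ rather than at $x_1$.
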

\begin{proof}
We prove that the dynamical system \eqref{syshai2} is equivalent to the following
\begin{gather}\label{202403120843}
    \begin{cases}
x'(t)=\gamma(t) (u(t)-x(t)),\\	
u'(t)=\mu(t) (y(t)-u(t)),
\end{cases}
\end{gather}
where 
\begin{gather}\label{202403120858}
\begin{cases}
\gamma'(t)+\alpha_1 (t) \gamma(t) =\gamma(t)^2 +\delta(t),\\
\gamma(t_0) =\frac 1 4 \alpha_1(t_0),    
\end{cases}
\end{gather}
and
\begin{gather}
\label{2024031208581} \mu(t)=\frac{\delta(t)}{\gamma(t)}.
\end{gather}
Indeed, the implication \eqref{syshai2}$\Longrightarrow$\eqref{202403120843} is proven as follows. Let $u$ be the solution of the equation
\begin{gather*}
\begin{cases}
u'(t)=\mu(t) (y(t)-u(t)),\\
u(t_0)=x_1.
\end{cases}
\end{gather*}
A direct computation gives
\begin{gather*}
(x'+\gamma x)'=(\gamma u)',
\end{gather*}
which implies, after integrating over $[t_0,t]$, that
\begin{gather*}
x'(t)+\gamma(t)x(t)-x'(t_0)-\gamma(t_0)x(t_0)=\gamma(t)u(t)-\gamma(t_0)u(t_0).    
\end{gather*}
Since $x'(t_0)+\gamma(t_0)x(t_0)=\gamma(t_0)u(t_0)$, we get \eqref{202403120843}.

Conversely, suppose that the pair $(x,u)$ satisfies \eqref{syshai2}. We have
\begin{gather*}
x''=[\gamma(u-x)]'=\gamma'(u-x)+\gamma(u'-x')\\
=\gamma'(u-x)+\gamma\mu(y-u)-\gamma x'\\
=-\alpha_1x'+\delta(y-x).
\end{gather*}

Next is to show that \eqref{202403120858} has a solution $\gamma (t)$ defined on $[t_0;\infty)$. The existence of a local solution of this equation follows from the Picard-Lindelof theorem. It remains to prove that $\gamma (t)$ does not tend to infinity  at any finite $t_b \in [0;\infty).$  Denote $z(t)= \gamma (t)-\frac 1 2 \alpha_1 (t)$, $ A(t)= \delta (t) -\frac 1 4 \alpha_1(t)^2 -\frac{1}{2} \alpha_1' (t), $ \eqref{202403120858}  becomes
$$
z'(t)= z(t)^2 +A(t).
$$
We have $z'(t) \geq A(t)$, or
$$
z(t) -z(t_0) \geq \int_{t_0}^t A(s)ds   \quad \forall t\geq t_0.
$$
Thus, $\gamma (t)$ does not tend to $-\infty$ at $t_b$. Next, we prove that $z(t) <0$ for all $t\geq t_0$. Suppose, on the contrary that there exists $t_1 >t_0$ such that $z(t_1) =0$. Take a small $\epsilon >0$.  Since $z(t_0)<0$, without loss of generality, we may assume that $z(t) <0$ for all $t\in [t_0;t_1-\epsilon]$. Since $z'(t) \leq z(t)^2$ and $z(t) \neq 0$ for all $t \in [t_0;t_1-\epsilon]$, we have
$$
\frac{1 }{z(t_0)}-\frac{1 }{z(t_1-\epsilon)}   =\int_{t_0}^{t_1-\epsilon} \frac{ z'(t)}{z(t)^2}dt \leq \int_{t_0}^{t_1-\epsilon} dt <t_1-t_0.
$$
Letting $\epsilon \to 0^{+}$, we obtain a contradiction.

So, there exist continuous function $\mu(t)$ and $\gamma (t)$ satisfying \eqref{202403120843}-\eqref{2024031208581}.
It follows that
$$
\gamma (t) =\exp \left( -\int_{t_0}^t \alpha_1 (u)du \right) \left( \int_{t_0}^{t} \exp \left( \int_{t_0}^u \alpha_1 (s)ds \right) (\gamma(u)^2+\delta (u))du +\gamma ({t_0})   \right).
$$
Thus, $\gamma (t)$ and $\mu (t) \geq {t_0}$ for all $t\geq {t_0}$.	Moreover, $u({t_0})=x_1   \in \Omega$. According to Lemma \ref{hai1},  $u(t)$ and $x(t)$ belong to $\Omega$ for all $t\geq {t_0}.$
\end{proof}
\begin{rem}
The result of Lemma \ref{hai1} cannot be extended to the 2-order dynamical system without  condition \eqref{202403022125}. Indeed, consider \eqref{syshai2} with $\Omega=[1;2] \subset \mathbb R $, $t_0=0$, $x_0=x_1=2$, $\alpha_1(t)=\delta (t)=2$, $y(t)=1$ for all $t\geq 0$. This dynamical system has a unique solution $x(t)=1+e^{-t}(\cos t+\sin t)$. But $x(t) \notin \Omega$ when $t\in (\pi; \frac 3 2 \pi)$ .
\end{rem}

\subsection{Difference equation}
In the section, we give the discrete counterpart of the dynamical system \eqref{202401310822}. To that aim, we recall the operation of forward difference and its properties used in the convergence analysis.
For $z:\Z\to\calh$ and $\kappa\in\Z_{\geq 1}$, we denote
\begin{gather*}
z^\dla(n)\triangleq z(n+1)-z(n),\quad z^\nabla(n)\triangleq z(n)-z(n-1),\\
z^{\dla\nabla}\triangleq (z^\dla)^\nabla,\quad z^{\nabla\dla}\triangleq (z^\nabla)^\dla.
\end{gather*}

\begin{prop}\label{rem20220120}
Let $f,g,h:\Z\to\R^d$. Always have
\begin{gather*}
\inner{h}{g}^\dla(n)=\inner{h^\dla(n)}{g(n)}+\inner{h(n)}{g^\dla(n)}+\inner{h^\dla(n)}{g^\dla(n)},\\
\inner{h}{g}^\nabla(n)=\inner{h^\nabla(n)}{g(n)}+\inner{h(n)}{g^\nabla(n)}-\inner{h^\nabla(n)}{g^\nabla(n)},\\
z^{\dla\nabla}(n)=z^{\nabla\dla}(n)=z(n+1)-2z(n)+z(n-1)=z^\dla(n)-z^\nabla(n).
\end{gather*}
\end{prop}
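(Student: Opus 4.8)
The three identities are the discrete Leibniz (product) rules for the forward and backward difference operators, together with the symmetry of the mixed second difference, so the plan is simply to expand each side from the definitions. The key device I would exploit is that the defining relations can be read in both directions: from $h^\dla(n)=h(n+1)-h(n)$ one recovers $h(n+1)=h(n)+h^\dla(n)$, and likewise $h(n-1)=h(n)-h^\nabla(n)$, with the analogous relations for $g$.

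For the first identity I would start from $\inner{h}{g}^\dla(n)=\inner{h(n+1)}{g(n+1)}-\inner{h(n)}{g(n)}$, substitute $h(n+1)=h(n)+h^\dla(n)$ and $g(n+1)=g(n)+g^\dla(n)$, and expand the inner product bilinearly into four terms. The term $\inner{h(n)}{g(n)}$ cancels against the subtracted copy, leaving exactly the three claimed summands $\inner{h^\dla(n)}{g(n)}+\inner{h(n)}{g^\dla(n)}+\inner{h^\dla(n)}{g^\dla(n)}$.

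The second identity is handled in the same way, but with the backward shifts: starting from $\inner{h}{g}^\nabla(n)=\inner{h(n)}{g(n)}-\inner{h(n-1)}{g(n-1)}$ and inserting $h(n-1)=h(n)-h^\nabla(n)$ and $g(n-1)=g(n)-g^\nabla(n)$, the bilinear expansion again cancels $\inner{h(n)}{g(n)}$; the sign on the second-order cross term is now negative because both backward increments are subtracted, which is precisely the source of the $-\inner{h^\nabla(n)}{g^\nabla(n)}$ term.

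For the last line I would compute the two orders of composition directly: $z^{\dla\nabla}(n)=z^\dla(n)-z^\dla(n-1)$ and $z^{\nabla\dla}(n)=z^\nabla(n+1)-z^\nabla(n)$, observe that each telescopes to $z(n+1)-2z(n)+z(n-1)$, and note that this same expression equals $z^\dla(n)-z^\nabla(n)$ by definition, giving all four equalities at once. There is essentially no genuine obstacle here; every step is a one-line substitution, so the only care required is correct bookkeeping of the index shifts and of the signs in the backward case.
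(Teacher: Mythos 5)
Your proof is correct: each identity follows by the direct substitution and bilinear expansion you describe, and the sign bookkeeping in the backward case is handled properly. The paper states this proposition without proof, treating it as an elementary computation, and your argument is exactly the verification that is implicitly intended.
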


Consider the difference equation, which is the discrete version of \eqref{202401310822}: 
\begin{gather}\label{202402061434}
\begin{cases}
z^{\dla\nabla}(n)+\beta_1(n)z^{\nabla}(n)=\xi(n)[w(n)-z(n)],\\
z(0)=z_0\in\Omega,z(1)=z_1\in\Omega.    
\end{cases}  
\end{gather}
where $\xi,\beta_1,\beta_0:\Z\to[0;\infty)$, $\eta:\Z\to\R$ and
\begin{gather}\label{202402250850}
w(n)\triangleq P_\Omega\left(z(n)+\eta(n)z^{\nabla}(n)-\frac{\beta_0(n)}{\max\{1,\|U(z(n)+\eta(n)z^{\nabla}(n))\|\}}U(z(n)+\eta(n)z^{\nabla}(n))\right).
\end{gather}

\begin{prop}[Equivalent form]
Equation \eqref{202402061434} has an equivalent form
\begin{gather}\label{al2}
    z(n+1)=[2-\beta_1(n)-\xi(n)]z(n)+[\beta_1(n)-1]z(n-1)+\xi(n)w(n).
\end{gather}
\end{prop}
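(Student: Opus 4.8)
The plan is to rewrite the two difference operators appearing in the first line of \eqref{202402061434} in terms of the three consecutive iterates $z(n+1)$, $z(n)$, $z(n-1)$, using Proposition \ref{rem20220120}, and then simply isolate $z(n+1)$. First I would invoke the identity $z^{\dla\nabla}(n)=z(n+1)-2z(n)+z(n-1)$ from Proposition \ref{rem20220120}, together with the definition $z^{\nabla}(n)=z(n)-z(n-1)$. Substituting both into $z^{\dla\nabla}(n)+\beta_1(n)z^{\nabla}(n)=\xi(n)[w(n)-z(n)]$ converts the relation into a plain three-term recurrence in which $w(n)$ (itself given explicitly by \eqref{202402250850}) enters only through the single term $\xi(n)w(n)$.

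Next I would collect terms: moving everything except $z(n+1)$ to the right-hand side and grouping the coefficients of $z(n)$ and of $z(n-1)$ separately. From the substitution the coefficient of $z(n)$ is $2-\beta_1(n)-\xi(n)$, while expanding $\beta_1(n)z^{\nabla}(n)=\beta_1(n)[z(n)-z(n-1)]$ contributes $+\beta_1(n)z(n-1)$ to the $-z(n-1)$ already present, yielding coefficient $\beta_1(n)-1$ for $z(n-1)$; together with $\xi(n)w(n)$ this is precisely \eqref{al2}. The equivalence is genuinely an if-and-only-if, since each manipulation—the two substitutions and the linear rearrangement—is reversible, and the initial data $z(0)=z_0$, $z(1)=z_1$ carry over unchanged.

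There is no substantive obstacle here: once Proposition \ref{rem20220120} is available, the argument is a one-line algebraic substitution followed by regrouping. The only point requiring care is the sign bookkeeping when expanding $\beta_1(n)z^{\nabla}(n)$, so that the coefficient of $z(n-1)$ comes out as $\beta_1(n)-1$ rather than $1-\beta_1(n)$; I would carry out that expansion explicitly to make the correspondence with the stated coefficients unambiguous.
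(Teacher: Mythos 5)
Your proposal is correct and follows essentially the same route as the paper, whose entire proof is the remark that it ``makes use of Proposition \ref{rem20220120}'': you substitute $z^{\dla\nabla}(n)=z(n+1)-2z(n)+z(n-1)$ and $z^{\nabla}(n)=z(n)-z(n-1)$ and regroup, and your coefficient bookkeeping ($2-\beta_1(n)-\xi(n)$ for $z(n)$ and $\beta_1(n)-1$ for $z(n-1)$) is exactly right. Nothing further is needed.
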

\begin{proof}
The proof makes use of Proposition \ref{rem20220120}.
\end{proof}

\begin{rem}
The numerical scheme \eqref{al2} can be re-written as 
\begin{gather*}
z(n+1)=z(n)+[1-\beta_1(n)][z(n)-z(n-1)]+\xi(n)\left\{w(n)-z(n)\right\}, 
\end{gather*}
which is an algorithm. 
\end{rem}

\begin{prop}\label{202402092014}
    Consider the difference equation \eqref{202402061434}, where $1\leq\beta_1(n)\leq\beta_1(n)+\xi(n)\leq 2$ for every $n\in\Z_{\geq 0}$. Then $z(n)\in\Omega$ for every $n\in\Z_{\geq 0}$.
\end{prop}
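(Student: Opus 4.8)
The plan is to argue by induction on $n$ using the equivalent form \eqref{al2}, which expresses $z(n+1)$ as an affine combination of the three points $z(n)$, $z(n-1)$, and $w(n)$. The crucial structural observation is that \eqref{al2} is in fact a \emph{convex} combination under the stated hypotheses, so that convexity of $\Omega$ closes the induction immediately.

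First I would record the base of the induction: by the initial data in \eqref{202402061434} we have $z(0)=z_0\in\Omega$ and $z(1)=z_1\in\Omega$. Next I would note that for every $n$ the point $w(n)$ lies in $\Omega$ automatically, since by \eqref{202402250850} it is defined as the projection $P_\Omega(\cdots)$ onto the closed convex set $\Omega$, and the range of $P_\Omega$ is contained in $\Omega$. Thus the third point appearing in \eqref{al2} is always admissible, independently of the induction hypothesis.

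For the inductive step, assume $z(n-1),z(n)\in\Omega$. Writing the three coefficients in \eqref{al2} as
$$c_1=2-\beta_1(n)-\xi(n),\qquad c_2=\beta_1(n)-1,\qquad c_3=\xi(n),$$
a direct computation gives $c_1+c_2+c_3=1$. Under the hypothesis $1\leq\beta_1(n)\leq\beta_1(n)+\xi(n)\leq 2$ together with $\xi(n)\geq 0$, each coefficient is nonnegative: $c_2\geq 0$ follows from $\beta_1(n)\geq 1$, $c_1\geq 0$ from $\beta_1(n)+\xi(n)\leq 2$, and $c_3\geq 0$ from $\xi(n)\geq 0$. Hence $z(n+1)=c_1 z(n)+c_2 z(n-1)+c_3 w(n)$ is a convex combination of points of $\Omega$, and since $\Omega$ is convex we conclude $z(n+1)\in\Omega$. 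This completes the induction.

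There is no genuine obstacle in this argument; the only point requiring care is the verification that the coefficients sum to $1$ and that each is nonnegative, which is precisely where the two-sided bound $1\leq\beta_1(n)\leq\beta_1(n)+\xi(n)\leq 2$ is used (the nonnegativity $\xi(n)\geq 0$ being already built into the standing assumption $\xi:\Z\to[0;\infty)$). The discrete result is the exact analogue of the convexity mechanism underlying Lemma \ref{hai1}, with the integral convex combination there replaced by the finite one here.
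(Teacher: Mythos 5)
Your proof is correct and is essentially identical to the paper's argument: the paper likewise observes that the three coefficients in \eqref{al2} are nonnegative and sum to $1$, and then closes by induction using convexity of $\Omega$. You simply spell out the details (including $w(n)\in\Omega$ via the projection) that the paper leaves implicit.
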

\begin{proof}
Note that the assumption of the proposition gives
\begin{gather*}
    0\leq 2-\beta_1(n)-\xi(n)\leq 1,\\
    0\leq\beta_1(n)-1\leq 1,\\
    0\leq\xi(n)\leq 1,\\
    [2-\beta_1(n)-\xi(n)]+[\beta_1(n)-1]+\xi(n)=1.
\end{gather*}
Thus, the proof is complete by using the induction argument on $n\in\Z_{\geq 0}$ and using \eqref{al2}.
\end{proof}

\section{Convergence analysis in continuous time}
With all preparation in place we can now conduct on the convergence research of the dynamical system \eqref{202401310822}. Our analysis is done with the help of the functions:
\begin{gather}\label{202402250851}
    v_{x_*}(t)\triangleq\frac{1}{2}\|x(t)-x_*\|^2,\quad b(t)\triangleq\frac{1}{2}\|x'(t)\|^2.
\end{gather}
Here $x_*\in\tap.$

\begin{lem}
Suppose that condition \eqref{202403022125} holds. Let $y(t)$ be the function given by \eqref{202402250847}. Then it holds that
\begin{gather}
\nonumber\|y(t)-x_*\|^2\\
\label{202401310841}\leq 2v_{x_*}(t)+2\lambda(t)^2b(t)+2\lambda(t)v_{x_*}'(t)+\alpha_0(t)^2\\
\nonumber-\frac{2\alpha_0(t)}{\max\{1,\|U(x(t)+\lambda(t)x'(t))\|\}}\inner{U(x(t)+\lambda(t)x'(t))}{x(t)+\lambda(t)x'(t)-x_*}\\
\label{202401310842}\leq 2v_{x_*}(t)+2\lambda(t)^2b(t)+2\lambda(t)v_{x_*}'(t)+\alpha_0(t)^2
\end{gather}
and
\begin{gather}
\label{202401310830}\|y(t)-x(t)\|\leq\lambda(t)\|x'(t)\|+\alpha_0(t).
\end{gather}
\end{lem}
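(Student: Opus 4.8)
The plan is to reduce everything to the nonexpansiveness of the metric projection $P_\Omega$ together with the elementary effect of the $\max\{1,\cdot\}$ normalization. Write $p(t)\triangleq x(t)+\lambda(t)x'(t)$ for the extrapolated argument and $c(t)\triangleq\alpha_0(t)/\max\{1,\norm{U(p(t))}\}$ for the normalized step, so that $y(t)=P_\Omega\bigl(p(t)-c(t)U(p(t))\bigr)$. Since $x_*\in\tap\subseteq\Omega$ we have $P_\Omega(x_*)=x_*$, and since $x(t)\in\Omega$ by Lemma~\ref{lemmahai2} (this is the only place condition~\eqref{202403022125} is used) we also have $P_\Omega(x(t))=x(t)$. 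The one computation I will reuse throughout is
\[
c(t)\,\norm{U(p(t))}=\frac{\alpha_0(t)\,\norm{U(p(t))}}{\max\{1,\norm{U(p(t))}\}}\leq\alpha_0(t),
\]
which is the whole purpose of the normalization.

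For inequality~\eqref{202401310841} I would start from nonexpansiveness, $\norm{y(t)-x_*}^2=\norm{P_\Omega(p(t)-c(t)U(p(t)))-P_\Omega(x_*)}^2\leq\norm{p(t)-c(t)U(p(t))-x_*}^2$, and expand the right-hand side as
\[
\norm{p(t)-x_*}^2-2c(t)\inner{U(p(t))}{p(t)-x_*}+c(t)^2\norm{U(p(t))}^2 .
\]
Writing $p(t)-x_*=(x(t)-x_*)+\lambda(t)x'(t)$ and expanding the first square, together with the definitions~\eqref{202402250851} and the identity $v_{x_*}'(t)=\inner{x'(t)}{x(t)-x_*}$, gives $\norm{p(t)-x_*}^2=2v_{x_*}(t)+2\lambda(t)v_{x_*}'(t)+2\lambda(t)^2b(t)$. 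The last term is bounded by $c(t)^2\norm{U(p(t))}^2\leq\alpha_0(t)^2$ (square the displayed estimate), and the middle term is precisely the cross term recorded in~\eqref{202401310841}; collecting the three pieces yields the first inequality.

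To pass to~\eqref{202401310842} it suffices to discard the cross term, i.e.\ to check that $\inner{U(p(t))}{p(t)-x_*}\geq0$, because it is subtracted with a nonnegative coefficient. Here the hypotheses on $U$ enter: since $x_*\in\tap$, the variational inequality gives $\inner{U(x_*)}{p(t)-x_*}\geq0$, while monotonicity of $U$ gives $\inner{U(p(t))-U(x_*)}{p(t)-x_*}\geq0$; adding the two proves the claim. For~\eqref{202401310830} I would again use nonexpansiveness, this time against the fixed point $x(t)=P_\Omega(x(t))$, obtaining $\norm{y(t)-x(t)}\leq\norm{p(t)-c(t)U(p(t))-x(t)}$; since $p(t)-x(t)=\lambda(t)x'(t)$, the triangle inequality bounds this by $\lambda(t)\norm{x'(t)}+c(t)\norm{U(p(t))}\leq\lambda(t)\norm{x'(t)}+\alpha_0(t)$.

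The individual steps are computationally routine, so the only real obstacle is a standing well-posedness point rather than an estimate: both the monotonicity argument for~\eqref{202401310842} and even the very definition of $U(p(t))$ require that the extrapolated point $p(t)=x(t)+\lambda(t)x'(t)$ belong to $\Omega$, not merely $x(t)$. I would therefore make certain this membership is genuinely guaranteed under the stated hypotheses (alongside $x(t)\in\Omega$ from Lemma~\ref{lemmahai2}) before invoking it, since it is what the paramonotonicity/variational-inequality step silently relies on.
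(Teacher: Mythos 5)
Your proof is correct and follows essentially the same route as the paper: nonexpansiveness of $P_\Omega$ against $P_\Omega(x_*)=x_*$ and $P_\Omega(x(t))=x(t)$, expansion of the square, and the bound $c(t)\|U(p(t))\|\leq\alpha_0(t)$ coming from the $\max\{1,\cdot\}$ normalization. You are in fact more explicit than the paper on one point: the paper simply asserts that the expansion "proves \eqref{202401310841} and \eqref{202401310842}," whereas passing to \eqref{202401310842} requires discarding the cross term via $\inner{U(p(t))}{p(t)-x_*}\geq 0$, which you correctly derive from monotonicity plus the variational inequality at $x_*$. Your closing caveat is also well taken: that argument (and the definition of $U(p(t))$ itself) needs $x(t)+\lambda(t)x'(t)\in\Omega$, which the paper only establishes in the remark following Assumption \ref{asu-coefficients} using condition \eqref{202403120830} --- a hypothesis not listed in the lemma as stated --- so the lemma is implicitly relying on more than condition \eqref{202403022125} alone.
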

\begin{proof}
We have
\begin{gather}
    \nonumber\|x(t)-x_*+\lambda(t)x'(t)\|^2\\
    \nonumber=\|x(t)-x_*\|^2+\lambda(t)^2\|x'(t)\|^2+2\lambda(t)\inner{x'(t)}{x(t)-x_*}\\
    \nonumber=2v_{x_*}(t)+2\lambda(t)^2b(t)+2\lambda(t)v_{x_*}'(t).
\end{gather}
Since $x(t)\in\Omega$ by Lemma \ref{lemmahai2}, we have $P_\Omega(x(t))=x(t)$. We estimate
{\small
\begin{gather}
\nonumber\|y(t)-x_*\|^2\\
\nonumber\leq\left\|x(t)+\lambda(t)x'(t)-x_*-\frac{\alpha_0(t)}{\max\{1,\|U(x(t)+\lambda(t)x'(t))\|\}}U(x(t)+\lambda(t)x'(t))\right\|^2\\
\nonumber=\|x(t)+\lambda(t)x'(t)-x_*\|^2+\left(\frac{\alpha_0(t)}{\max\{1,\|U(x(t)+\lambda(t)x'(t))\|\}}\right)^2\|U(x(t)+\lambda(t)x'(t))\|^2\\
\nonumber-\frac{2\alpha_0(t)}{\max\{1,\|U(x(t)+\lambda(t)x'(t))\|\}}\inner{U(x(t)+\lambda(t)x'(t))}{x(t)+\lambda(t)x'(t)-x_*},
\end{gather}
}
which proves \eqref{202401310841} and \eqref{202401310842}. Meanwhile,
{\small
\begin{gather}
\nonumber\|y(t)-x(t)\|\\
\nonumber=\left\|P_\Omega\left(x(t)+\lambda(t)x'(t)-\frac{\alpha_0(t)}{\max\{1,\|U(x(t)+\lambda(t)x'(t))\|\}}U(x(t)+\lambda(t)x'(t))\right)-P_\Omega(x(t))\right\|\\
\nonumber\leq\left\|\lambda(t)x'(t)-\frac{\alpha_0(t)}{\max\{1,\|U(x(t)+\lambda(t)x'(t))\|\}}U(x(t)+\lambda(t)x'(t))\right\|,
\end{gather}
}   
which proves \eqref{202401310830}.
\end{proof}

We study the dynamical system \eqref{202401310822}, under conditions \eqref{202402201440}-\eqref{202401310813}.
\begin{asu}\label{asu-coefficients}
    Assume that there exist constants $C_1,C_2>0$ such that
\begin{gather}
\label{202401310811}2\alpha_1(t)-2\delta(t)\lambda(t)-C_2\delta(t)\geq C_1\quad\forall t\geq t_0.
\end{gather}
And assume that
    \begin{gather}
    \nonumber\text{Condition \eqref{202403022125} holds}.\\
    \label{202402201440}\text{The functions $\alpha_0,\delta:[t_0,\infty)\to\R_{>0}$ are locally integrable}.\\
    \label{202401310810-o}\text{The functions $\alpha_1,\delta:[t_0,\infty)\to\R_{>0}$ and $\lambda:[t_0,\infty)\to\R_{\geq 0}$ are locally absolutely continuous}.\\
    \label{202402081959}\text{The function $\lambda(t)$ is bounded above.}\\
    \label{202403120830}\lambda(t)\gamma(t)\leq 1\quad\forall t\geq t_0.\\
    \label{202401310810}\frac{d}{dt}[\alpha_1(t)-\delta(t)\lambda(t)]\leq 0\quad\forall t\geq t_0.\\
\label{202401310812}\int\limits_{t_0}^\infty\delta(t)\alpha_0(t)^2\,dt<\infty.\\
\label{202401310813}\int\limits_{t_0}^\infty\delta(t)\alpha_0(t)\,dt=\infty.
    \end{gather}
\end{asu}

\begin{rem}
Condition \eqref{202401310810} can be replaced by
\begin{gather*}
\alpha_1'(t)\leq 0\leq\min\{\delta'(t),\lambda'(t)\}.    
\end{gather*}
Indeed, we have
\begin{gather*}
\frac{d}{dt}[\alpha_1(t)-\delta(t)\lambda(t)]=\alpha_1'(t)-\delta'(t)\lambda(t)-\delta(t)\lambda'(t)\leq 0.    
\end{gather*}
\end{rem}

\begin{rem}
Under Assumption \ref{asu-coefficients}, we observe some remarks.
\begin{enumerate}
\item By \eqref{202402081959}, we denote
\begin{gather}\label{202402201529}
    \lambda_{\sup}\triangleq\sup\{\lambda(t):t\geq t_0\}.
\end{gather}
\item By \eqref{202403120830}, we prove
\begin{gather*}
    x(t)+\lambda(t)x'(t)\in\Omega\quad\forall t\geq t_0.
\end{gather*}
Indeed, using the equivalent form \eqref{202403120843}, we have
\begin{gather*}
x(t)+\lambda(t)x'(t)=\lambda(t)\gamma(t)u(t)+(1-\lambda(t)\gamma(t))x(t)\in\Omega\quad\forall t\geq t_0.    
\end{gather*}
    \item By \eqref{202401310811} and \eqref{202401310810} we have
\begin{gather*}
    C_2\delta(t)\leq 2\alpha_1(t)-2\delta(t)\lambda(t)\leq 2\alpha_1(t_0)-2\delta(t_0)\lambda(t_0)\leq 2\alpha_1(t_0),
\end{gather*}
and so
\begin{gather}\label{202401311436}
    \delta(t)\leq\frac{2\alpha_1(t_0)}{C_2}.
\end{gather}
\item Again using \eqref{202401310811} and \eqref{202401310810}, we get
\begin{gather}
    \nonumber\alpha_1(t)=[\alpha_1(t)-\delta(t)\lambda(t)]+\delta(t)\lambda(t)\\
    \nonumber\leq\alpha_1(t_0)+\delta(t)\lambda(t)\\
    \label{202402090927}\leq\alpha_1(t_0)+\frac{2}{C_2}\alpha_1(t_0)\lambda_{\sup}.
\end{gather}
\end{enumerate}
\end{rem}

\subsubsection{Theoretical analysis}
\begin{thm}\label{thm202402061556}
Consider the dynamical system \eqref{202401310822} under Assumptions \ref{asu-U} and \ref{asu-coefficients}. Moreover, suppose that  this dynamical system admits a global solution $x(t)$. Then the following conclusions hold.
\begin{enumerate}
    \item[(i)] $\|x''\|\in L^2$.
    \item[(ii)] $\lim\limits_{t\to\infty}x'(t)=0$.
    \item[(iii)] $\lim\limits_{t\to\infty}x(t)=x_\star\in\tap.$
\end{enumerate}
\end{thm}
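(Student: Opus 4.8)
The plan is to run a Lyapunov analysis based on the functions $v_{x_*}$ and $b$ from \eqref{202402250851}, harvesting the three conclusions from the two ODE lemmas (Lemma~\ref{lem-main-0}, Lemma~\ref{lem-main-1}) together with an Opial-type argument. First I would record the differential identities obtained by differentiating \eqref{202402250851} and substituting \eqref{202401310822}:
\begin{gather*}
v_{x_*}''+\alpha_1 v_{x_*}'=2b+\delta\inner{y-x}{x-x_*},\qquad b'=-2\alpha_1 b+\delta\inner{x'}{y-x}.
\end{gather*}
Using the polarization identity $\inner{y-x}{x-x_*}=\tfrac12\norm{y-x_*}^2-\tfrac12\norm{y-x}^2-v_{x_*}$ and the estimate \eqref{202401310841}, the $v_{x_*}$-terms cancel and I obtain the master inequality
\begin{gather*}
v_{x_*}''+(\alpha_1-\delta\lambda)v_{x_*}'+M+\tfrac12\delta\norm{y-x}^2\leq(2+\delta\lambda^2)b+\tfrac12\delta\alpha_0^2=:\rho,
\end{gather*}
where $M:=\frac{\delta\alpha_0}{\max\{1,\norm{U(x+\lambda x')}\}}\inner{U(x+\lambda x')}{x+\lambda x'-x_*}$. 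The decisive structural point is that $M\geq 0$: since \eqref{202403120830} gives $x+\lambda x'\in\Omega$, monotonicity of $U$ and $x_*\in\tap$ yield $\inner{U(x+\lambda x')}{x+\lambda x'-x_*}\geq 0$.

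The next step is to prove $b\in L^1$, from which (i) and (ii) are short. From the $b$-identity and \eqref{202401310830} I get $b'\leq-2(\alpha_1-\delta\lambda)b+\delta\alpha_0\norm{x'}$; the coercivity \eqref{202401310811} gives $2(\alpha_1-\delta\lambda)\geq C_1$, and a Young splitting of $\delta\alpha_0\norm{x'}=\delta\alpha_0\sqrt{2b}$ reduces this to $b'\leq-\tfrac{C_1}{2}b+K(\delta\alpha_0)^2$. Since $\delta$ is bounded by \eqref{202401311436} and $\int\delta\alpha_0^2<\infty$ by \eqref{202401310812}, the forcing term $(\delta\alpha_0)^2$ is integrable, so integrating yields $b\in L^1$. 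Then (i) follows from $\norm{x''}^2\leq 2\alpha_1^2\norm{x'}^2+2\delta^2\norm{y-x}^2$ together with \eqref{202401310830} and the boundedness of $\alpha_1,\delta,\lambda$; and (ii) follows by applying Lemma~\ref{lem-main-1} to $b$, using $b'\leq\norm{x'}\,\norm{x''}\in L^1$.

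For (iii) I would first upgrade the master inequality to a Fej\'er-type statement. Because $b\in L^1$ and the coefficients are bounded, $\rho\in L^1$ and $\rho\geq 0$; writing $\Psi:=v_{x_*}'$, the master inequality gives $\Psi'+(\alpha_1-\delta\lambda)\Psi\leq\rho$ with $\alpha_1-\delta\lambda\geq C_1/2>0$, and a Gr\"onwall/convolution estimate shows $\Psi^+\in L^1$. Consequently $v_{x_*}$ is bounded (so the trajectory is bounded) and, by Lemma~\ref{lem-main-0} applied with $w=\Psi^+$, $\lim_{t\to\infty}v_{x_*}(t)$ exists for every $x_*\in\tap$. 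Integrating the master inequality over $[t_0,T]$ and using boundedness of $v_{x_*},v_{x_*}'$ together with $\alpha_1-\delta\lambda$ nonincreasing \eqref{202401310810} shows $M\in L^1$. Since $M=\delta\alpha_0\,\Phi$ with $\Phi:=\frac{\inner{U(x+\lambda x')}{x+\lambda x'-x_*}}{\max\{1,\norm{U(x+\lambda x')}\}}\geq0$ and $\int\delta\alpha_0=\infty$ by \eqref{202401310813}, necessarily $\liminf_{t\to\infty}\Phi(t)=0$; choosing $t_n\to\infty$ with $\Phi(t_n)\to0$ and passing to a convergent subsequence $x(t_n)\to\bar x\in\Omega$, the relations $x'\to0$, boundedness of $\lambda$, and continuity of $U$ give $\inner{U(\bar x)}{\bar x-x_*}=0$, whence $\bar x\in\tap$ by Lemma~\ref{lem-tap}. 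Finally, since $\lim_{t}v_{\bar x}(t)$ exists and vanishes along $t_n$, the whole trajectory converges: $x(t)\to\bar x\in\tap$.

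The main obstacle is the passage from the integrability $M\in L^1$ to an actual solution. Paramonotonicity (unlike strong monotonicity) yields no rate and only the normalized gap $\Phi$, so the argument must (a) secure the existence of $\lim v_{x_*}$ for \emph{all} $x_*\in\tap$ (the quasi-Fej\'er property, resting on the $\Psi^+\in L^1$ estimate) and (b) extract a single cluster point in $\tap$ via the divergence of $\int\delta\alpha_0$, handling the $\max\{1,\norm{U(\cdot)}\}$ normalization through continuity of $U$, before combining the two by an Opial-type argument. A secondary technical point is the coupling between the two quantities: $\rho\in L^1$, which is needed for the entire $v_{x_*}$-analysis, is only available after $b\in L^1$ has been established, so the order of the steps is essential.
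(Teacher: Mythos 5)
Your proposal is correct and follows essentially the same route as the paper: first $b\in L^{1}\cap L^{\infty}$ from the $b$-identity with the coercivity condition \eqref{202401310811}, then boundedness of the trajectory and existence of $\lim_{t\to\infty}v_{x_*}(t)$ from the $v_{x_*}$-inequality, then integrability of the weighted gap $\delta\alpha_0\inner{U(x+\lambda x')}{x+\lambda x'-x_*}$ combined with \eqref{202401310813} and Lemma~\ref{lem-tap} to identify a cluster point in $\tap$. The only differences are cosmetic (an exact polarization identity in place of the paper's Cauchy--Schwarz step, and a Gr\"onwall estimate for $(v_{x_*}')^{+}$ where the paper applies Lemma~\ref{lem-main-0} directly to $v_{x_*}'+[\alpha_1-\delta\lambda]v_{x_*}$).
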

\begin{proof}
Our analysis is done with the help of the functions in \eqref{202402250851}. For the sake of the reader, we divide the proof into the steps.

{\bf Step 1:} Prove that
\begin{gather}\label{202402020817}
    \|x'\|\in L^\infty\cap L^2.
\end{gather}
A direct computation shows
\begin{gather*}
    b'(t)+2\alpha_1(t)b(t)=\inner{x''(t)+\alpha_1(t)x'(t)}{x'(t)}\\
    =\delta(t)\inner{y(t)-x(t)}{x'(t)}\quad\text{(by \eqref{202401310822})}\\
    \leq\delta(t)\|y(t)-x(t)\|\cdot\|x'(t)\|\\
    \leq\delta(t)[\lambda(t)\|x'(t)\|+\alpha_0(t)]\cdot\|x'(t)\|\quad\text{(by \eqref{202401310830})},
\end{gather*}
which implies, by using the Cauchy-Schwarz inequality, that
\begin{gather*}
b'(t)+2\alpha_1(t)b(t)
\leq 2\delta(t)\lambda(t)b(t)+\delta(t)\left[\frac{\alpha_0(t)^2}{2C_2}+C_2b(t)\right].
\end{gather*}
Then
\begin{gather}
\nonumber\frac{1}{2C_2}\delta(t)\alpha_0(t)^2\geq b'(t)+[2\alpha_1(t)-2\delta(t)\lambda(t)-C_2\delta(t)]b(t)\\
\nonumber\geq b'(t)+C_1b(t)\quad\text{(by \eqref{202401310811})}.    
\end{gather}
After integrating the last inequality with respect to the variable $t\in[t_0,s]$, we get
\begin{gather*}
    \frac{1}{2C_2}\int\limits_{t_0}^s\delta(t)\alpha_0(t)^2\,dt+b(t_0)
    \geq b(s)+C_1\int\limits_{t_0}^sb(t)\,dt,
\end{gather*}
which by \eqref{202401310812}, yields \eqref{202402020817}.

{\bf Step 2:} Show that
\begin{gather}\label{202402020906}
    \text{$x$ is bounded.}
\end{gather}
We observe
\begin{gather}
    \nonumber v_{x_*}''(t)+\alpha_1(t)v_{x_*}'(t)=2b(t)+\delta(t)\inner{y(t)-x(t)}{x(t)-x_*}\\
    \nonumber=2b(t)+\delta(t)\inner{y(t)-x_*}{x(t)-x_*}-2\delta(t)v_{x_*}(t)\\
    \label{202402020848}\leq 2b(t)+\frac{\delta(t)}{2}\|y(t)-x_*\|^2-\delta(t)v_{x_*}(t)\quad\text{(by the Cauchy-Schwarz inequality)}\\
    \nonumber\leq [2+\delta(t)\lambda(t)^2]b(t)+\delta(t)\alpha_0(t)^2+\delta(t)\lambda(t)v_{x_*}'(t)\quad\text{(by \eqref{202401310842})},
\end{gather}
and so
\begin{gather}
    \nonumber [2+\delta(t)\lambda(t)^2]b(t)+\delta(t)\alpha_0(t)^2\\
    \nonumber\geq v_{x_*}''(t)+\frac{d}{dt}\left\{[\alpha_1(t)-\delta(t)\lambda(t)]v_{x_*}(t)\right\}-[\alpha_1(t)-\delta(t)\lambda(t)]'v_{x_*}(t)\\
    \label{202402020830}\geq v_{x_*}''(t)+\frac{d}{dt}\left\{[\alpha_1(t)-\delta(t)\lambda(t)]v_{x_*}(t)\right\}\quad\text{(by \eqref{202401310810})}.
\end{gather}
We integrate the last inequality with respect to the variable $t\in[t_0,s]$
\begin{gather*}
    \int\limits_{t_0}^s[2+\delta(t)\lambda(t)^2]b(t)\,dt+\int\limits_{t_0}^s\delta(t)\alpha_0(t)^2\,dt
    +v_{x_*}'(t_0)+[\alpha_1(t_0)-\delta(t_0)\lambda(t_0)]v_{x_*}(t_0)\\
    \geq v_{x_*}'(s)+[\alpha_1(s)-\delta(s)\lambda(s)]v_{x_*}(s)\\
    \geq v_{x_*}'(s)+\frac{1}{2}C_1v_{x_*}(s)\quad\text{(by \eqref{202401310811})}.
\end{gather*}
Using \eqref{202402201529}, \eqref{202401311436}, \eqref{202402020817} and \eqref{202401310812}, there exists a constant $M_1=M_1(t_0)$ such that
\begin{gather*}
M_1\geq v_{x_*}'(s)+\frac{1}{2}C_1v_{x_*}(s),  
\end{gather*}
which gives \eqref{202402020906}.

With all preparation in place we can now prove the convergence of $x(t).$

(i) It results from \eqref{202401310822} that
\begin{gather*}
    \|x''(t)\|\leq\alpha_1(t)\|x'(t)\|+\delta(t)\|y(t)-x(t)\|\\
    \leq[\alpha_1(t)+\delta(t)\lambda(t)]\cdot\|x'(t)\|+\delta(t)\alpha_0(t)\quad\text{(by \eqref{202401310830})},
\end{gather*}
which gives
\begin{gather*}
\|x''(t)\|^2\leq[2(\alpha_1(t)+\delta(t)\lambda(t))^2+1]\cdot[b(t)+\delta(t)^2\alpha_0(t)^2]\\
\leq[2(\alpha_1(t)+\delta(t)\lambda(t))^2+1]\cdot\left[b(t)+\frac{2\alpha_1(t_0)}{C_2}\delta(t)\alpha_0(t)^2\right]\quad\text{(by \eqref{202401311436})}\\
\leq\text{constant}\cdot\left[b(t)+\frac{2\alpha_1(t_0)}{C_2}\delta(t)\alpha_0(t)^2\right]\quad\text{(by \eqref{202402201529}, \eqref{202401311436}, \eqref{202402090927})}\\
\in L^1\quad\text{(by \eqref{202402020817} and \eqref{202401310812})}.
\end{gather*}

(ii) The limit holds as we make use of Lemma \ref{lem-main-1}, together with the following
\begin{gather*}
    \frac{d}{dt}\|x'(t)\|^2=\inner{x''(t)}{x'(t)}\leq\frac{1}{2}(\|x''(t)\|^2+\|x'(t)\|^2)\in L^1.
\end{gather*}

(iii) It follows from \eqref{202402020830} that
\begin{gather}
\nonumber [2+\delta(t)\lambda(t)^2]b(t)+\delta(t)\alpha_0(t)^2
\geq v_{x_*}''(t)+\frac{d}{dt}\left\{[\alpha_1(t)-\delta(t)\lambda(t)]v_{x_*}(t)\right\}\\
\label{202401311512}=\frac{d}{dt}\{v_{x_*}'(t)+[\alpha_1(t)-\delta(t)\lambda(t)]v_{x_*}(t)\}.
\end{gather}
Inequalities \eqref{202401311512}, \eqref{202402020817} and \eqref{202401310812} together with Lemma \ref{lem-main-0} show that the limit 
\begin{gather*}
\lim\limits_{t\to\infty}\{v_{x_*}'(t)+[\alpha_1(t)-\delta(t)\lambda(t)]v_{x_*}(t)\}    
\end{gather*}
exists. Note that by (ii), the limit
\begin{gather*}
\lim\limits_{t\to\infty}[\alpha_1(t)-\delta(t)\lambda(t)]v_{x_*}(t)=\lim\limits_{t\to\infty}\{v_{x_*}'(t)+[\alpha_1(t)-\delta(t)\lambda(t)]v_{x_*}(t)\}    
\end{gather*}
exists, too. Condition \eqref{202401310810} shows that the function $\alpha_1(t)-\delta(t)\lambda(t)$ is decreasing and \eqref{202401310811} gives
\begin{gather*}
\alpha_1(t)-\delta(t)\lambda(t)\geq\frac{C_1}{2}>0.    
\end{gather*}
Consequently, there exists $\lim\limits_{t\to\infty}[\alpha_1(t)-\delta(t)\lambda(t)]>0$. From what have been proven, 
\begin{gather}\label{202402021548}
\text{$\lim\limits_{t\to\infty}\|x(t)-x_*\|^2$ exists for every $x_*\in\tap$}.
\end{gather}
Using \eqref{202402020906}, (ii), \eqref{202402081959} and the assumption that the operator $U$ is continuous, there is a constant $M_3=M_3(t_0)>0$ subject to the following
\begin{gather*}
    \|U(x(t)+\lambda(t)x'(t))\|\leq M_3\quad\forall t\geq t_0.
\end{gather*}
Denote
\begin{gather*}
    M_4\triangleq\frac{1}{\max\{1,M_3\}}.
\end{gather*}
By \eqref{202401310841}, we get
\begin{gather}\label{202401312036}
    \|y(t)-x_*\|^2
    \leq 2v_{x_*}(t)+2\lambda(t)^2b(t)+2\lambda(t)v_{x_*}'(t)+\alpha_0(t)^2\\
\nonumber-2M_4\alpha_0(t)\inner{U(x(t)+\lambda(t)x'(t))}{x(t)+\lambda(t)x'(t)-x_*}.
\end{gather}
Using \eqref{202401312036}, we estimate \eqref{202402020848} as follows
\begin{gather*}
    v_{x_*}''(t)+\alpha_1(t)v_{x_*}'(t)\\
    \leq [2+\delta(t)\lambda(t)^2]b(t)+\delta(t)\lambda(t)v_{x_*}'(t)+\delta(t)\alpha_0(t)^2\\
    -M_4\delta(t)\alpha_0(t)\inner{U(x(t)+\lambda(t)x'(t))}{x(t)+\lambda(t)x'(t)-x_*},
\end{gather*}
which is equivalent to saying that
{\small
\begin{gather*}
    [2+\delta(t)\lambda(t)^2]b(t)+\delta(t)\alpha_0(t)^2\\
    \geq v_{x_*}''(t)+[\alpha_1(t)-\delta(t)\lambda(t)]v_{x_*}'(t)
    +M_4\delta(t)\alpha_0(t)\inner{U(x(t)+\lambda(t)x'(t))}{x(t)+\lambda(t)x'(t)-x_*}\\
    \geq\dfrac{d}{dt}\{v_{x_*}'(t)+[\alpha_1(t)-\delta(t)\lambda(t)]v_{x_*}(t)\}
    +M_4\delta(t)\alpha_0(t)\inner{U(x(t)+\lambda(t)x'(t))}{x(t)+\lambda(t)x'(t)-x_*}.
\end{gather*}
}
Using \eqref{202402201529}, \eqref{202401311436}, \eqref{202402090927}, we have
{\small
\begin{gather*}
\text{constant}\cdot b(t)+\delta(t)\alpha_0(t)^2
    \geq[2+\delta(t)\lambda(t)^2]b(t)+\delta(t)\alpha_0(t)^2\\
    \geq\dfrac{d}{dt}\{v_{x_*}'(t)+[\alpha_1(t)-\delta(t)\lambda(t)]v_{x_*}(t)\}
    +M_4\delta(t)\alpha_0(t)\inner{U(x(t)+\lambda(t)x'(t))}{x(t)+\lambda(t)x'(t)-x_*}.
\end{gather*}
}
Integrating with respect to the variable $t\in[t_0,s]$,
{\small
\begin{gather*}
\text{constant}\cdot \int\limits_{t_0}^sb(t)\,dt+\int\limits_{t_0}^s\delta(t)\alpha_0(t)^2\,dt
+v_{x_*}'(t_0)+[\alpha_1(t_0)-\delta(t_0)\lambda(t_0)]v_{x_*}(t_0)\\
\geq v_{x_*}'(s)+[\alpha_1(s)-\delta(s)\lambda(s)]v_{x_*}(s)
+M_4\int\limits_{t_0}^s\delta(t)\alpha_0(t)\inner{U(x(t)+\lambda(t)x'(t))}{x(t)+\lambda(t)x'(t)-x_*}\,dt\\
\geq v_{x_*}'(s)
+M_4\int\limits_{t_0}^s\delta(t)\alpha_0(t)\inner{U(x(t)+\lambda(t)x'(t))}{x(t)+\lambda(t)x'(t)-x_*}\,dt\quad\text{(by \eqref{202401310811})}.
\end{gather*}
}
Use \eqref{202401310812}, \eqref{202402020817} and \eqref{202402020906} to get
\begin{gather*}
\int\limits_{t_0}^{\infty}\delta(t)\alpha_0(t)\inner{U(x(t)+\lambda(t)x'(t))}{x(t)+\lambda(t)x'(t)-x_*}\,dt<\infty,
\end{gather*}
which implies, by \eqref{202401310813}, that
\begin{gather*}
\inf\{\inner{U(x(t)+\lambda(t)x'(t))}{x(t)+\lambda(t)x'(t)-x_*}:t\geq t_0\}=0.   
\end{gather*}
Consequently, there is a sequence $(t_n)$ with the property
\begin{gather}
\label{202402021544}\lim\limits_{n\to\infty}\inner{U(x(t_n)+\lambda(t_n)x'(t_n))}{x(t_n)+\lambda(t_n)x'(t_n)-x_*}=0.
\end{gather}
It follows from \eqref{202402020906} that there exists a convergent subsequence $(x(t_{n_k}))$ of $(x(t_n))$. Denote
\begin{gather}\label{202402021550}
x_{\odot}\triangleq\lim\limits_{k\to\infty}x(t_{n_k}).    
\end{gather}
Letting $k\to\infty$ in \eqref{202402021544}, we get
\begin{gather*}
    \inner{U(x_{\odot})}{x_{\odot}-x_*}=0,
\end{gather*}
which gives $x_{\odot}\in\tap$ by Lemma \ref{lem-tap}. Hence, by \eqref{202402021548}, the limit $\lim\limits_{t\to\infty}\|x(t)-x_{\odot}\|$ exists and then by \eqref{202402021550} we obtain $\lim\limits_{t\to\infty}\|x(t)-x_{\odot}\|=0$.
\end{proof}

\subsubsection{Parameters choices}
Some examples are given to illustrate Assumption \ref{asu-coefficients}. 

\begin{prop}
Consider the dynamical system \eqref{202401310822}, where
\begin{gather*}
\begin{cases}
    \alpha_0(t)=\frac{1}{(t+1)^q},\quad\alpha_1(t)=h+\frac{1}{(t+1)^s},\\
    \\
    \delta(t)=\frac{1}{(t+1)^p},\quad\lambda(t)=0.
\end{cases}
\end{gather*}
Then $\lim\limits_{t\to\infty}x(t)=x_\star\in\tap$ if conditions \eqref{202410262114}-\eqref{202410262117} hold.
\begin{gather}
\label{202410262114}h>2.\\
\label{202410262115}0<s<\frac{1}{2}.\\
\label{202410262116}s<p<1.\\
\label{202410262117}\frac{1}{2}(1-p)<q\leq 1-p.
\end{gather}
\end{prop}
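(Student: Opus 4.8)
The plan is to reduce everything to Theorem~\ref{thm202402061556}: it suffices to check that the proposed coefficients meet Assumption~\ref{asu-coefficients}, after which part~(iii) of that theorem yields $\lim_{t\to\infty}x(t)=x_\star\in\tap$ (the theorem also presupposes existence of a global solution, which I would invoke from the existence statement above). Since $\lambda\equiv 0$, a large part of the assumption collapses at once: the boundedness \eqref{202402081959} holds with $\lambda_{\sup}=0$, the constraint \eqref{202403120830} reads $0\le 1$, and every occurrence of the product $\delta(t)\lambda(t)$ disappears. The regularity requirements \eqref{202402201440} and \eqref{202401310810-o} are immediate because $\alpha_0,\alpha_1,\delta$ are smooth and strictly positive on $[t_0,\infty)$ (hence locally integrable and locally absolutely continuous) and $\lambda\equiv 0$ is constant.

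Next I would dispatch the damping conditions. With $\lambda\equiv 0$, condition \eqref{202401310811} reduces to $2\alpha_1(t)-C_2\delta(t)\ge C_1$; using $\alpha_1(t)\ge h$ and $\delta(t)=(t+1)^{-p}\le 1$, the choice $C_2=1$, $C_1=2h-1$ works because $h>2$ by \eqref{202410262114}. The monotonicity condition \eqref{202401310810} becomes $\alpha_1'(t)\le 0$, which holds since $\alpha_1'(t)=-s(t+1)^{-s-1}<0$ for $s>0$ by \eqref{202410262115}.

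The one genuinely nonlinear check is the smoothing condition \eqref{202403022125}, namely $4\delta(t)<\alpha_1(t)^2+2\alpha_1'(t)$, and this is where I expect the main (though modest) difficulty, since $\alpha_1'(t)<0$ subtracts. I would expand
\[
\alpha_1(t)^2+2\alpha_1'(t)=h^2+(t+1)^{-s}\Big[2h+(t+1)^{-s}-2s(t+1)^{-1}\Big],
\]
and observe that the bracket is at least $(2h-2s)+(t+1)^{-s}>0$ because $h>2>\tfrac12>s$; hence $\alpha_1(t)^2+2\alpha_1'(t)>h^2>4\ge 4\delta(t)$, using $h>2$ and $\delta\le 1$ once more. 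Thus \eqref{202403022125} holds for all $t\ge t_0$.

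Finally I would carry out the two integral computations, which pin down the exponents. Since $\delta(t)\alpha_0(t)^2=(t+1)^{-(p+2q)}$ and $\delta(t)\alpha_0(t)=(t+1)^{-(p+q)}$, condition \eqref{202401310812} is equivalent to $p+2q>1$ and condition \eqref{202401310813} to $p+q\le 1$; together these read precisely $\tfrac12(1-p)<q\le 1-p$, i.e. \eqref{202410262117}, whose nonemptiness forces $p<1$ as in \eqref{202410262116}. With all of Assumption~\ref{asu-coefficients} verified, Theorem~\ref{thm202402061556}(iii) delivers the claimed convergence. The only delicate point is the simultaneous satisfaction of the opposing integral conditions \eqref{202401310812}--\eqref{202401310813}, which is exactly what confines $q$ to the window \eqref{202410262117}.
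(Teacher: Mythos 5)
Your proposal is correct and follows essentially the same route as the paper: verify each item of Assumption~\ref{asu-coefficients} for the given coefficients and invoke Theorem~\ref{thm202402061556}(iii), with the integral conditions \eqref{202401310812}--\eqref{202401310813} translating exactly into $p+2q>1$ and $p+q\le 1$. The only differences are cosmetic choices of constants (you take $C_2=1$, $C_1=2h-1$ and bound $4\delta(t)\le 4<h^2$ using $\delta\le 1$, whereas the paper takes $C_2=2$, $C_1=2h$ and compares $(t+1)^{-s}$ with $(t+1)^{-p}$ via $s<p$), both of which are valid.
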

\begin{proof}
We have
\begin{gather*}
\alpha_1(t)^2+2\alpha_1'(t)=h^2+\frac{2h}{(t+1)^s}+\frac{1}{(t+1)^{2s}}-\frac{2s}{(t+1)^{s+1}}\\
>0+\frac{2h}{(t+1)^s}+0\quad\text{(by \eqref{202410262115})}\\
>\frac{4}{(t+1)^s}>4\delta(t)\quad\text{(by \eqref{202410262114} and \eqref{202410262116})},
\end{gather*}
which gives \eqref{202403022125}. Condition \eqref{202401310811} follows from the fact that
\begin{gather*}
2\alpha_1(t)-2\delta(t)\lambda(t)-2\delta(t)\\
=2h+\frac{2}{(t+1)^s}-0-\frac{2}{(t+1)^p}>2h\quad\text{(by \eqref{202410262116})},
\end{gather*}
where $C_1=2h$ and $C_2=2$. It can be observed that 
\begin{gather*}
\frac{d}{dt}[\alpha_1(t)-\delta(t)\lambda(t)]=-\frac{s}{(t+1)^{s+1}}<0\quad\text{(by \eqref{202410262115})},  
\end{gather*}
which proves \eqref{202401310810}. Condition \eqref{202401310812} is equivalent to saying that $p+2q>1$; meanwhile, condition \eqref{202401310813} is the fact that $p+q\leq 1$. Thus, we can make use of Theorem \ref{thm202402061556}.
\end{proof}

\begin{prop}\label{pp1}
Consider the dynamical system \eqref{202401310822}, where
\begin{gather*}
\begin{cases}
    \alpha_0(t)=\frac{1}{(t+1)^q},\quad\alpha_1(t)=h+\frac{1}{(t+1)^s},\\
    \\
    \delta(t)=u\,\text{(positive constant)},\quad\lambda(t)=0.
\end{cases}
\end{gather*}
Then $\lim\limits_{t\to\infty}x(t)=x_\star\in\tap$ if conditions \eqref{202410262114}-\eqref{202410262117} hold.
\begin{gather}
\label{202410262114a}h>2\sqrt{u}.\\
\label{202410262115a}0<s<\frac{1}{2}.\\
\label{202410262117a}\frac{1}{2}<q\leq 1.
\end{gather}
\end{prop}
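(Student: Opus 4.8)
The plan is to check, one by one, every hypothesis collected in Assumption \ref{asu-coefficients} for the present coefficients and then conclude directly from Theorem \ref{thm202402061556}. Because $\lambda\equiv 0$, a batch of requirements is automatic: \eqref{202402081959} holds trivially, \eqref{202403120830} reduces to $0\le 1$, and the regularity requirements \eqref{202402201440} and \eqref{202401310810-o} hold since $\alpha_0(t)=(t+1)^{-q}$, $\delta\equiv u$ and $\alpha_1(t)=h+(t+1)^{-s}$ are smooth and positive on $[t_0,\infty)$. So the real work is to verify the smoothing inequality \eqref{202403022125}, the lower bound \eqref{202401310811}, the monotonicity condition \eqref{202401310810}, and the two integral conditions \eqref{202401310812}--\eqref{202401310813}.

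For \eqref{202403022125} I would compute $\alpha_1'(t)=-s(t+1)^{-s-1}$ and expand $\alpha_1(t)^2+2\alpha_1'(t)=h^2+\frac{2h}{(t+1)^s}+\frac{1}{(t+1)^{2s}}-\frac{2s}{(t+1)^{s+1}}$. The decisive point is that $\frac{1}{(t+1)^{2s}}-\frac{2s}{(t+1)^{s+1}}=\frac{1}{(t+1)^{2s}}\bigl(1-2s(t+1)^{s-1}\bigr)>0$, since \eqref{202410262115a} forces $2s<1\le (t+1)^{1-s}$ for $t\ge t_0\ge 0$. Dropping this nonnegative term and the positive middle term leaves $\alpha_1(t)^2+2\alpha_1'(t)>h^2$, and because \eqref{202410262114a} gives $h^2>4u$ we obtain $\frac14\bigl(\alpha_1(t)^2+2\alpha_1'(t)\bigr)>\tfrac{h^2}{4}>u=\delta(t)$, which is exactly \eqref{202403022125}.

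The genuinely delicate point is \eqref{202401310811}, and this is where I expect the main obstacle. Here $\delta\equiv u$ is a positive \emph{constant}, so, unlike in the previous proposition, the term $C_2\delta(t)$ does not vanish as $t\to\infty$; in particular the choice $C_2=2,\ C_1=2h$ used there need not work, as $h>2\sqrt u$ does not imply $h>u$. Instead, with $\lambda\equiv 0$ the left-hand side of \eqref{202401310811} equals $2\alpha_1(t)-C_2u$, so taking $C_2=h/u$ gives $2\alpha_1(t)-C_2u=h+\frac{2}{(t+1)^s}\ge h=:C_1>0$ for all $t\ge t_0$, establishing \eqref{202401310811}. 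Condition \eqref{202401310810} is then immediate, since $\frac{d}{dt}[\alpha_1(t)-\delta(t)\lambda(t)]=\alpha_1'(t)=-s(t+1)^{-s-1}<0$ by \eqref{202410262115a}. Finally \eqref{202401310812} amounts to convergence of $\int^{\infty}u(t+1)^{-2q}\,dt$, valid iff $2q>1$, while \eqref{202401310813} amounts to divergence of $\int^{\infty}u(t+1)^{-q}\,dt$, valid iff $q\le 1$; both are supplied by \eqref{202410262117a}. With every hypothesis of Assumption \ref{asu-coefficients} confirmed, Theorem \ref{thm202402061556} yields $\lim_{t\to\infty}x(t)=x_\star\in\tap$, the only step requiring care being the constant selection in \eqref{202401310811}, forced by the constant damping $\delta\equiv u$.
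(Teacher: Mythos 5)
Your proposal is correct and follows essentially the same route as the paper: a term-by-term verification of Assumption \ref{asu-coefficients} followed by an appeal to Theorem \ref{thm202402061556}, with the same key computation $\alpha_1(t)^2+2\alpha_1'(t)>h^2>4u$ for \eqref{202403022125} and the same constant selection $C_2=h/u$, $C_1=h$ for \eqref{202401310811} (the paper's prose writes ``$C_1=\frac{h}{u}$ and $C_2=h$'', but its displayed inequality agrees with your labeling, so yours is the consistent reading). Your observation that the choice $C_2=2$, $C_1=2h$ from the preceding proposition would fail here because $\delta\equiv u$ no longer decays is accurate and is implicitly the reason the paper switches constants as well.
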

\begin{proof}
We have
\begin{gather*}
\alpha_1(t)^2+2\alpha_1'(t)=h^2+\frac{2h}{(t+1)^s}+\frac{1}{(t+1)^{2s}}-\frac{2s}{(t+1)^{s+1}}\\
>h^2+\frac{2h}{(t+1)^s}+0\quad\text{(by \eqref{202410262115a})}\\
>h^2>4u\quad\text{(by \eqref{202410262114a})},
\end{gather*}
which gives \eqref{202403022125}. Condition \eqref{202401310811} follows from the fact that
\begin{gather*}
2\alpha_1(t)-2\delta(t)\lambda(t)-\frac{h}{u}\delta(t)\\
=2h+\frac{2}{(t+1)^s}-0-h>h,
\end{gather*}
where $C_1=\frac{h}{u}$ and $C_2=h$. It can be observed that 
\begin{gather*}
\frac{d}{dt}[\alpha_1(t)-\delta(t)\lambda(t)]=-\frac{s}{(t+1)^{s+1}}<0\quad\text{(by \eqref{202410262115a})},  
\end{gather*}
which proves \eqref{202401310810}. Condition \eqref{202401310812} is equivalent to saying that $2q>1$; meanwhile, condition \eqref{202401310813} is the fact that $q\leq 1$. Thus, we can make use of Theorem \ref{thm202402061556}.
\end{proof}

\section{Convergence analysis in discrete time}
In the section, we offer a convergence analysis for the dynamical system \eqref{202402061434}. Our analysis uses the following functions
\begin{gather*}
    v_{x_*}(n)\triangleq\|z(n)-x_*\|^2,\quad a(n)\triangleq\|z^{\dla}(n)\|^2,\quad c(n)\triangleq\|z^{\nabla}(n)\|^2.
\end{gather*}

\begin{lem}
Let $w(n)$ be the sequence given by \eqref{202402250850}. Then it holds that
\begin{gather}
\nonumber\|w(n)-x_*\|^2\\
\label{202401310841a2}\leq v_{x_*}(n)+\eta(n)v_{x_*}^{\nabla}(n)+\eta(n)[\eta(n)+1]c(n)+\beta_0(n)^2\\
\nonumber-\frac{2\beta_0(n)}{\max\{1,\|U(z(n)+\eta(n)z^{\nabla}(n))\|\}}\inner{U(z(n)+\eta(n)z^{\nabla}(n))}{z(n)+\eta(n)z^{\nabla}(n)-x_*}\\
\label{202401310842a2}\leq v_{x_*}(n)+\eta(n)v_{x_*}^{\nabla}(n)+\eta(n)[\eta(n)+1]c(n)+\beta_0(n)^2
\end{gather}
and
\begin{gather}
\label{202402092025}\|w(n)-z(n)\|\leq |\eta(n)|\cdot\|z^{\nabla}(n)\|+\beta_0(n).
\end{gather}
\end{lem}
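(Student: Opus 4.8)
The plan is to follow the continuous-time lemma almost verbatim, the only genuinely new ingredient being a discrete product-rule identity in place of the expansion of $\|x(t)-x_*+\lambda(t)x'(t)\|^2$. Throughout write $\zeta(n)\triangleq z(n)+\eta(n)z^\nabla(n)$ and $\kappa(n)\triangleq\frac{\beta_0(n)}{\max\{1,\|U(\zeta(n))\|\}}\geq 0$, so that $w(n)=P_\Omega\bigl(\zeta(n)-\kappa(n)U(\zeta(n))\bigr)$.

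First I would establish the exact identity
\[
\|\zeta(n)-x_*\|^2=v_{x_*}(n)+\eta(n)v_{x_*}^\nabla(n)+\eta(n)[\eta(n)+1]c(n).
\]
Expanding the square gives $\|\zeta(n)-x_*\|^2=v_{x_*}(n)+2\eta(n)\inner{z^\nabla(n)}{z(n)-x_*}+\eta(n)^2c(n)$, so it suffices to convert the cross term. Applying the $\nabla$-product rule of Proposition \ref{rem20220120} with $h(n)=g(n)=z(n)-x_*$ (for which $\inner{h}{g}(n)=v_{x_*}(n)$ and $h^\nabla(n)=z^\nabla(n)$) yields $v_{x_*}^\nabla(n)=2\inner{z^\nabla(n)}{z(n)-x_*}-c(n)$, i.e.\ $2\inner{z^\nabla(n)}{z(n)-x_*}=v_{x_*}^\nabla(n)+c(n)$; substituting this produces the claimed identity.

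Next, since $x_*\in\tap\subseteq\Omega$ we have $P_\Omega(x_*)=x_*$, so the nonexpansiveness of $P_\Omega$ gives $\|w(n)-x_*\|^2\leq\|\zeta(n)-x_*-\kappa(n)U(\zeta(n))\|^2$. Expanding the right-hand side and using $\kappa(n)^2\|U(\zeta(n))\|^2\leq\beta_0(n)^2$ (because $\|U(\zeta(n))\|\leq\max\{1,\|U(\zeta(n))\|\}$), together with the identity above, gives exactly \eqref{202401310841a2}. The weaker bound \eqref{202401310842a2} then follows by discarding the term $-2\kappa(n)\inner{U(\zeta(n))}{\zeta(n)-x_*}$, which is nonpositive: monotonicity of $U$ yields $\inner{U(\zeta(n))-U(x_*)}{\zeta(n)-x_*}\geq 0$, while the variational inequality satisfied by $x_*$ gives $\inner{U(x_*)}{\zeta(n)-x_*}\geq 0$, so their sum $\inner{U(\zeta(n))}{\zeta(n)-x_*}$ is nonnegative.

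For \eqref{202402092025} I would once more invoke $P_\Omega(z(n))=z(n)$ and nonexpansiveness to obtain $\|w(n)-z(n)\|\leq\|\eta(n)z^\nabla(n)-\kappa(n)U(\zeta(n))\|$, and then close with the triangle inequality and $\kappa(n)\|U(\zeta(n))\|\leq\beta_0(n)$; the absolute value $|\eta(n)|$ appears precisely because $\eta(n)$ may be negative. The main obstacle is getting the discrete identity exactly right: the $\nabla$-calculus carries the correction term $-\inner{h^\nabla(n)}{g^\nabla(n)}$ that has no analogue in the smooth product rule, and it is exactly this term that turns the naive coefficient $\eta(n)^2$ into $\eta(n)[\eta(n)+1]$. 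A secondary point requiring care is the sign argument for \eqref{202401310842a2}, which tacitly uses $\zeta(n)=z(n)+\eta(n)z^\nabla(n)\in\Omega$ (so that both monotonicity and the variational inequality apply); this is the discrete counterpart of the feasibility fact exploited in the continuous proof, ensured by the standing conditions on $\beta_1,\xi,\eta$ analogous to those giving $z(n)\in\Omega$ in Proposition \ref{202402092014}.
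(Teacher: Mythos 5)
Your proof is correct and takes essentially the same route as the paper's: the same identity $v_{x_*}^{\nabla}(n)=2\inner{z^{\nabla}(n)}{z(n)-x_*}-c(n)$ to rewrite $\|z(n)+\eta(n)z^{\nabla}(n)-x_*\|^2$ in the stated form, then nonexpansiveness of $P_\Omega$, expansion of the square, and the bound $\left(\frac{\beta_0(n)}{\max\{1,\|U(\cdot)\|\}}\right)^2\|U(\cdot)\|^2\leq\beta_0(n)^2$. You are in fact a bit more explicit than the paper, which merely points back to the continuous-time argument: you spell out why the cross term can be dropped in \eqref{202401310842a2} (monotonicity plus the variational inequality at $x_*$, which requires $z(n)+\eta(n)z^{\nabla}(n)\in\Omega$ as guaranteed by \eqref{202402112025}) and you write out the triangle-inequality step for \eqref{202402092025}, both of which the paper leaves to the reader.
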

\begin{proof}
We have
\begin{gather*}
    v_{x_*}^{\nabla}(n)=2\inner{z^{\nabla}(n)}{z(n)-x_*}-c(n),
\end{gather*}
and so
\begin{gather*}
    \|z(n)-x_*+\eta(n)z^{\nabla}(n)\|^2\\
    =\|z(n)-x_*\|^2+\eta(n)^2\|z^{\nabla}(n)\|^2+2\eta(n)\inner{z^{\nabla}(n)}{z(n)-x_*}\\
    =v_{x_*}(n)+\eta(n)v_{x_*}^{\nabla}(n)+\eta(n)[\eta(n)+1]c(n).
\end{gather*}
Since $z(n)\in\Omega$ by Proposition \ref{202402092014}, we have $P_\Omega(z(n))=z(n)$. Using arguments like as \eqref{202401310830}, \eqref{202401310841} and \eqref{202401310842}, we get
{\small
\begin{gather}
\nonumber\|w(n)-x_*\|^2\\
\nonumber\leq\left\|z(n)+\eta(n)z^{\nabla}(n)-x_*-\frac{\beta_0(n)}{\max\{1,\|U(z(n)+\eta(n)z^{\nabla}(n))\|\}}U(z(n)+\eta(n)z^{\nabla}(n))\right\|^2\\
\nonumber=\|z(n)+\eta(n)z^{\nabla}(n)-x_*\|^2+\left(\frac{\beta_0(n)}{\max\{1,\|U(z(n)+\eta(n)z^{\nabla}(n))\|\}}\right)^2\|U(z(n)+\eta(n)z^{\nabla}(n))\|^2\\
\nonumber-\frac{2\beta_0(n)}{\max\{1,\|U(z(n)+\eta(n)z^{\nabla}(n))\|\}}\inner{U(z(n)+\eta(n)z^{\nabla}(n))}{z(n)+\eta(n)z^{\nabla}(n)-x_*},
\end{gather}
}
which prove \eqref{202401310841a2} and \eqref{202401310842a2}. Proving \eqref{202402092025} is left to the reader as it is similar to \eqref{202401310830}.
\end{proof}

We study the dynamical system \eqref{202402061434}, under conditions \eqref{202402040925}-\eqref{202402032026}.
\begin{asu}\label{asu-dis-tim}
Assume that there exist constants $Q_2\in(0,1),Q_1>0$ such that
\begin{gather}
    \label{202402040925}[\beta_1(n)-1+\xi(n)][\beta_1(n)-1+(\eta(n)^2+Q_1)\xi(n)]\leq 1-Q_2\quad\forall n\in\Z_{\geq 0}.
\end{gather}
And assume that
\begin{gather}
\label{202402092106}-1\leq\eta(n)\leq 0\quad\forall n\in\Z_{\geq 0}.\\
\label{202404021032}\text{The sequence $\beta_1(n)-\xi(n)\eta(n)$ is decreasing}.\\
\label{202402041047}\sum_{j=0}^\infty\xi(j)\beta_0(j)^2<\infty.\\
\label{202402042040}\sum_{j=0}^\infty\xi(j)\beta_0(j)=\infty.\\
\label{202402032026}1\leq\beta_1(n)\leq\beta_1(n)+\xi(n)\leq 2\quad\forall n\in\Z_{\geq 0}.
\end{gather}
\end{asu}

\begin{rem}
Under Assumption \ref{asu-dis-tim}, we have some remarks. 
\begin{enumerate}
    \item[(i)] By condition \eqref{202402092106}, we have
\begin{gather}\label{202402112025}
z(n)+\eta(n)z^{\nabla}(n)=[1+\eta(n)]z(n)+(-\eta(n))z(n-1)\in\Omega.
\end{gather}
    \item[(ii)] Condition \eqref{202402040925} reveals
\begin{gather*}
    \beta_1(n)
    \geq\xi(n)[\beta_1(n)-1]+\beta_1(n)[\beta_1(n)-1]+\xi(n)[\xi(n)+\beta_1(n)-1][\eta(n)^2+Q_1]+Q_2\\
    \geq\xi(n)^2[\eta(n)^2+Q_1]+Q_2\quad\text{(as $\beta_1(n)\geq 1$)}\\
    >\xi(n)^2\eta(n)^2+Q_2,
\end{gather*}
which implies, as $\beta_1(n)\geq 1$, that
\begin{gather*}
\beta_1(n)^2>\xi(n)^2\eta(n)^2+Q_2.    
\end{gather*}
Thus, we get $\beta_1(n)>\sqrt{\xi(n)^2\eta(n)^2+Q_2}$ and
\begin{gather}
    \nonumber\beta_1(n)-\xi(n)\eta(n)>\frac{Q_2}{\sqrt{\xi(n)^2\eta(n)^2+Q_2}+\xi(n)\eta(n)}\\
    \label{202402102229}\geq 1-q\triangleq\frac{Q_2}{\sqrt{1+Q_2}}\quad\text{(as $\xi(n)\leq 2-\beta_1(n)\leq 1$)}.
\end{gather}
\item Conditions in \eqref{202402032026} help us use Proposition \ref{202402092014}.
\end{enumerate}
\end{rem}

\subsubsection{Theoretical analysis}
\begin{thm}
Consider the difference equation \eqref{202402061434}, under Assumptions \ref{asu-U} and \ref{asu-dis-tim}. Then it holds that $\lim\limits_{n\to\infty}z(n)=z_\star\in\tap.$
\end{thm}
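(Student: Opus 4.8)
The plan is to follow the blueprint of the continuous-time proof of Theorem~\ref{thm202402061556}, transcribing each analytic step into its difference-calculus counterpart with the help of Proposition~\ref{rem20220120}. Throughout I would exploit the identity $z^{\Delta\nabla}(n)=z^\Delta(n)-z^\nabla(n)$ and the shift relation $a(n)=\|z^\Delta(n)\|^2=\|z^\nabla(n+1)\|^2=c(n+1)$, together with the explicit one-step form $z^\Delta(n)=-[\beta_1(n)-1]z^\nabla(n)+\xi(n)[w(n)-z(n)]$ read off from \eqref{al2}. Two facts are available as preliminaries: $z(n)\in\Omega$ for all $n$ by Proposition~\ref{202402092014} (using \eqref{202402032026}), and $z(n)+\eta(n)z^\nabla(n)\in\Omega$ by \eqref{202402112025} (using \eqref{202402092106}), so that $w(n)=P_\Omega(\cdots)$ and $P_\Omega(z(n))=z(n)$ are both legitimate.

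\textbf{Step 1 (the main obstacle): $z^\nabla\in\ell^2\cap\ell^\infty$.} Squaring the one-step form and using $a(n)=c(n+1)$ gives $c(n+1)=[\beta_1(n)-1]^2c(n)-2[\beta_1(n)-1]\xi(n)\inner{z^\nabla(n)}{w(n)-z(n)}+\xi(n)^2\|w(n)-z(n)\|^2$. I would bound the cross term and the last term with \eqref{202402092025} and Young's inequality, absorbing the $\beta_0$-contributions into summable remainders. The role of hypothesis \eqref{202402040925} is precisely to force the resulting coefficient of $c(n)$ to be no larger than $1-Q_2<1$; telescoping and invoking \eqref{202402041047} then yield $\sum_n c(n)<\infty$ and $\sup_n c(n)<\infty$. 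In particular $c(n)=\|z^\nabla(n)\|^2\to 0$, so $z^\nabla(n)\to 0$ — this is the discrete substitute for parts (i)--(ii) of Theorem~\ref{thm202402061556}, and it comes out more directly than in continuous time. Making the quadratic form in \eqref{202402040925} dominate all $c(n)$ contributions after the Cauchy--Schwarz/Young splitting is the genuinely delicate bookkeeping of the whole argument.

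\textbf{Step 2: boundedness of $z$.} Using the product rules of Proposition~\ref{rem20220120} I would compute $v_{x_*}^{\Delta\nabla}(n)=2\inner{z^{\Delta\nabla}(n)}{z(n)-x_*}+a(n)+c(n)$, substitute the equation and the estimate \eqref{202401310842a2}, and regroup so that $v_{x_*}^{\Delta\nabla}(n)+[\beta_1(n)-\xi(n)\eta(n)]v_{x_*}^\nabla(n)$ becomes the forward difference of $v_{x_*}^\nabla(n)+[\beta_1(n)-\xi(n)\eta(n)]v_{x_*}(n)$ plus a remainder made sign-favorable by the monotonicity hypothesis \eqref{202404021032}, exactly as \eqref{202401310810} is used in the continuous Step 2. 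Summing over $n$ and feeding in Step 1 together with \eqref{202402041047} bounds $v_{x_*}^\nabla(s)+[\beta_1(s)-\xi(s)\eta(s)]v_{x_*}(s)$; since $\beta_1-\xi\eta$ is bounded below by $1-q>0$ via \eqref{202402102229}, this forces $\sup_n v_{x_*}(n)<\infty$, i.e. $z$ is bounded. Boundedness of $z$ together with $z^\nabla(n)\to0$ places $z(n)+\eta(n)z^\nabla(n)$ in a compact subset of $\Omega$, on which the continuous operator $U$ is bounded; I set $M_3$ for this bound and $M_4=1/\max\{1,M_3\}$.

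\textbf{Convergence.} Finally I would rerun the Step 2 computation while keeping the sharper inequality \eqref{202401310841a2}, so that the master estimate carries the extra nonnegative term $M_4\xi(n)\beta_0(n)\inner{U(z(n)+\eta(n)z^\nabla(n))}{z(n)+\eta(n)z^\nabla(n)-x_*}$ on its right-hand side. A discrete telescoping lemma (the difference analogue of Lemma~\ref{lem-main-0}, proved by summing increments) shows that $\lim_n\{v_{x_*}^\nabla(n)+[\beta_1(n)-\xi(n)\eta(n)]v_{x_*}(n)\}$ exists; since $z^\nabla(n)\to0$ gives $v_{x_*}^\nabla(n)\to0$ and $\beta_1-\xi\eta$ decreases to a positive limit, $\lim_n v_{x_*}(n)$ exists for every $x_*\in\tap$, while simultaneously $\sum_n\xi(n)\beta_0(n)\inner{U(z(n)+\eta(n)z^\nabla(n))}{z(n)+\eta(n)z^\nabla(n)-x_*}<\infty$. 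Combining this with the divergence \eqref{202402042040} produces a subsequence along which the inner product tends to $0$; passing to a further convergent subsequence $z(n_k)\to z_\odot$ (possible by boundedness) and using $z^\nabla(n)\to0$ with the continuity of $U$ yields $\inner{U(z_\odot)}{z_\odot-x_*}=0$, whence $z_\odot\in\tap$ by Lemma~\ref{lem-tap}. Since $\lim_n\|z(n)-z_\odot\|^2$ exists and a subsequence converges to $0$, the whole sequence satisfies $z(n)\to z_\odot$, which completes the proof.
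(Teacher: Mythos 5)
Your proposal is correct and follows essentially the same route as the paper: a telescoping estimate on $c(n)=\|z^\nabla(n)\|^2$ driven by \eqref{202402040925} and \eqref{202402041047} to get $c\in\ell^1\cap\ell^\infty$, then the Lyapunov-type regrouping of $v_{x_*}^{\Delta\nabla}(n)+[\beta_1(n)-\xi(n)\eta(n)]v_{x_*}^\nabla(n)$ using \eqref{202404021032} and \eqref{202402102229} for boundedness and existence of $\lim_n v_{x_*}(n)$, and finally the sharper bound \eqref{202401310841a2} plus \eqref{202402042040} and Lemma~\ref{lem-tap} to identify the limit. The only cosmetic difference is that you work with the one-step recursion for $c(n+1)$ via $a(n)=c(n+1)$ where the paper expands $c^{\Delta}(n)+2\beta_1(n)c(n)$; these are algebraically equivalent.
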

\begin{proof}
For the sake of the reader, we divide the proof into the steps.

{\bf Step1:} Show that
\begin{gather}\label{2024020409376}
c\in\ell^\infty\cap\ell^1\Longrightarrow a\in\ell^\infty\cap\ell^1.    
\end{gather}
Indeed, we use Proposition \ref{rem20220120} to get
\begin{gather*}
c^{\dla}(n)+2\beta_1(n)c(n)\\
=2\inner{z^{\dla\nabla}(n)+\beta_1(n)z^{\dla}(n)}{z^{\nabla}(n)}+\|z^{\dla\nabla}(n)\|^2\\
=2\xi(n)\inner{w(n)-z(n)}{z^{\nabla}(n)}+\|-\beta_1(n)z^{\nabla}(n)+\xi(n)[w(n)-z(n)]\|^2\quad\text{(by \eqref{202402061434})}\\
=2\xi(n)[1-\beta_1(n)]\inner{w(n)-z(n)}{z^{\nabla}(n)}+\xi(n)^2\|w(n)-z(n)\|^2+\beta_1(n)^2c(n),
\end{gather*}
which implies, by \eqref{202402032026} and the Cauchy-Schwarz inequality, that
\begin{gather*}
c^{\dla}(n)+2\beta_1(n)c(n)\\
\leq\xi(n)[\xi(n)+\beta_1(n)-1]\|w(n)-z(n)\|^2+[\xi(n)(\beta_1(n)-1)+\beta_1(n)^2]c(n).
\end{gather*}
The inequality above is equivalent to the following
\begin{gather*}
c^{\dla}(n)+[2\beta_1(n)-\xi(n)(\beta_1(n)-1)-\beta_1(n)^2]c(n)\\
\leq\xi(n)[\xi(n)+\beta_1(n)-1]\cdot\|w(n)-z(n)\|^2\\
\leq\xi(n)[\xi(n)+\beta_1(n)-1]\cdot[-\eta(n)\|z^{\nabla}(n)\|+\beta_0(n)]^2\quad\text{(by \eqref{202402092025})}.
\end{gather*}
Using the the Cauchy-Schwarz inequality, we get
\begin{gather*}
c^{\dla}(n)+[2\beta_1(n)-\xi(n)(\beta_1(n)-1)-\beta_1(n)^2]c(n)\\
\leq\xi(n)[\xi(n)+\beta_1(n)-1]\cdot[Q_1c(n)+\beta_0(n)^2]\left(\frac{\eta(n)^2}{Q_1}+1\right),
\end{gather*}
which is equivalent to saying that
\begin{gather*}
c^{\dla}(n)+[2\beta_1(n)-\xi(n)(\beta_1(n)-1)-\beta_1(n)^2-\xi(n)(\xi(n)+\beta_1(n)-1)(\eta(n)^2+Q_1)]c(n)\\
\leq\xi(n)(\xi(n)+\beta_1(n)-1)\left(\frac{\eta(n)^2}{Q_1}+1\right)\beta_0(n)^2.
\end{gather*}
By \eqref{202402040925} and \eqref{202402032026}, we estimate
\begin{gather*}
c^{\dla}(n)+Q_2c(n)
\leq 2\xi(n)\left(\frac{\eta(n)^2}{Q_1}+1\right)\beta_0(n)^2\\
\leq 2\left(\frac{1}{Q_1}+1\right)\xi(n)\beta_0(n)^2,
\end{gather*}
where we use condition \eqref{202402092106}. Summing the line above from $n=1$ to $n=m-1$ yields
\begin{gather*}
c(m)-c(1)+Q_2\sum_{n=1}^{m-1}c(n)\leq 2\left(\frac{1}{Q_1}+1\right)\sum_{n=1}^{m-1}\xi(n)\beta_0(n)^2,   
\end{gather*}
which by \eqref{202402041047}, gives \eqref{2024020409376}.

Recall that a necessary condition for the convergence of a series is that the series terms must go to zero in the limit, and so
\begin{gather}\label{2024020416296}
    \lim\limits_{n\to\infty}\|z(n)-z(n-1)\|=0.
\end{gather}

{\bf Step 2:} Next, we prove that
\begin{gather}\label{2024020416366}
    \text{$z$ is bounded.}
\end{gather}
To that aim, we compute
\begin{gather*}
    v_{x_*}^{\dla\nabla}(n)+\beta_1(n)v_{x_*}^{\nabla}(n)=2\inner{z^{\dla\nabla}(n)+\beta_1(n)z^{\nabla}(n)}{z(n)-x_*}\\
    +2a(n)-\beta_1(n)c(n)-c^\dla(n),
\end{gather*}
which implies, by \eqref{202402061434}, that
\begin{gather*}
v_{x_*}^{\dla\nabla}(n)+\beta_1(n)v_{x_*}^{\nabla}(n)\\
\leq 2\xi(n)\inner{w(n)-z(n)}{z(n)-x_*}+2a(n)-\beta_1(n)c(n)-c^\dla(n)\\
= 2\xi(n)\inner{w(n)-x_*}{z(n)-x_*}-2\xi(n)v_{x_*}(n)+2a(n)-\beta_1(n)c(n)-c^\dla(n).
\end{gather*}
By the Cauchy-Schwarz inequality, we get
\begin{gather}
\nonumber v_{x_*}^{\dla\nabla}(n)+\beta_1(n)v_{x_*}^{\nabla}(n)\\
\label{2024020417246}\leq\xi(n)\|w(n)-x_*\|^2-\xi(n)v_{x_*}(n)+2a(n)-\beta_1(n)c(n)-c^\dla(n),
\end{gather}
which yields, by \eqref{202401310842a2} and \eqref{202402032026}, that
\begin{gather}
\nonumber v_{x_*}^{\dla\nabla}(n)+\beta_1(n)v_{x_*}^{\nabla}(n)\leq\xi(n)\eta(n)v_{x_*}^{\nabla}(n)+\xi(n)\beta_0(n)^2+2a(n)\\
\nonumber+[\xi(n)\eta(n)(\eta(n)+1)-\beta_1(n)]c(n)-c^\dla(n).
\end{gather}
We get
\begin{gather*}
v_{x_*}^{\dla\nabla}(n)+[\beta_1(n)-\xi(n)\eta(n)]v_{x_*}^{\nabla}(n)\\
\leq\xi(n)\beta_0(n)^2+2a(n)+[\xi(n)\eta(n)(\eta(n)+1)-\beta_1(n)]c(n)-c^\dla(n)\\
\leq\xi(n)\beta_0(n)^2+2a(n)-c^\dla(n)\quad\text{(by \eqref{202402092106})}.
\end{gather*}
For $n\geq 1$, we have
{\small
\begin{gather}
\nonumber v_{x_*}^{\dla\nabla}(n)+[\beta_1(n)-\xi(n)\eta(n)]v_{x_*}^{\nabla}(n)\\
\label{2024020420326}\geq v_{x_*}^{\dla\nabla}(n)+[\beta_1(n)-\xi(n)\eta(n)]v_{x_*}(n)-[\beta_1(n-1)-\xi(n-1)\eta(n-1)]v_{x_*}(n-1)\quad\text{(by \eqref{202404021032})}.
\end{gather}
}
Thus, we obtain
\begin{gather*}
v_{x_*}^{\dla\nabla}(n)+[\beta_1(n)-\xi(n)\eta(n)]v_{x_*}(n)-[\beta_1(n-1)-\xi(n-1)\eta(n-1)]v_{x_*}(n-1)\\
    \leq\xi(n)\beta_0(n)^2+2a(n)-c^\dla(n),
\end{gather*}
which implies, after summing from $n=\kappa$ to $n=m-1$, that
\begin{gather}
\nonumber v_{x_*}^{\nabla}(m)+[\beta_1(m-1)-\xi(m-1)\eta(m-1)]v_{x_*}(m-1)+c(m)\\
\nonumber\leq v_{x_*}^{\nabla}(\kappa)+[\beta_1(\kappa-1)-\xi(k-1)\eta(k-1)]v_{x_*}(\kappa-1)+c(\kappa)\\
\label{2024020416426}+\sum_{n=\kappa}^{m-1}[\xi(n)\beta_0(n)^2+2a(n)].
\end{gather}
In particular with $\kappa=1$, the inequality above and condition \eqref{202402041047} show 
\begin{gather*}
v_{x_*}^{\nabla}(m)+[\beta_1(m-1)-\xi(m-1)\eta(m-1)]v_{x_*}(m-1)+c(m)\\
\leq v_{x_*}^{\nabla}(1)+[\beta_1(0)-\xi(0)\eta(0)]v_{x_*}(0)+c(1)\\
+\sum_{n=1}^{m-1}[\xi(n)\beta_0(n)^2+2a(n)],
\end{gather*}
which implies, by \eqref{202402102229} and \eqref{202402041047}, that
\begin{gather}
\nonumber v_{x_*}^{\nabla}(m)+(1-q)v_{x_*}(m-1)\\
\label{202402111652}\leq K\triangleq v_{x_*}^{\nabla}(1)+[\beta_1(0)-\xi(0)\eta(0)]v_{x_*}(0)+c(1)+\sum_{n=0}^{\infty}[\xi(n)\beta_0(n)^2+2a(n)].
\end{gather}
Here, $q$ is defined in \eqref{202402102229} and it satisfies
\begin{gather*}
    0<1-q=\frac{Q_2}{\sqrt{1+Q_2}}<\sqrt{Q_2}<1.
\end{gather*}
It results from \eqref{202402111652} that
\begin{gather*}
    K\geq v_{x_*}^{\nabla}(m)+(1-q)v_{x_*}(m-1)=v_{x_*}(m)-qv_{x_*}(m-1),
\end{gather*}
and so,
\begin{gather*}
v_{x_*}(m)\leq qv_{x_*}(m-1)+K\leq q^2v_{x_*}(m-2)+K(q+1)\leq\cdots\\
\leq q^mv_{x_*}(0)+K(q^{m-1}+q^{m-2}+\cdots+q+1)<v_{x_*}(0)+\frac{K}{1-q}.
\end{gather*}
The boundedness of $z$ is proven.

{\bf Step 3:} Show that
\begin{gather}\label{2024020421086}
    \text{$\lim\limits_{m\to\infty}v_{x_*}(m)$ exists for every $x_*\in\tap$.}
\end{gather}

To see that, we combine \eqref{2024020416296} with \eqref{2024020416366} to get the following limits.
\begin{gather*}
    \lim\limits_{m\to\infty}v_{x_*}^{\dla}(m)=0=\lim\limits_{m\to\infty}c(m),\\
    \lim\limits_{\kappa\to\infty}v_{x_*}^{\dla}(\kappa)=0=\lim\limits_{\kappa\to\infty}c(\kappa).
\end{gather*}
Through letting $m,\kappa\to\infty$ in \eqref{2024020416426} and then using \eqref{202402041047} and the limits above, we obtain
\begin{gather*}
    \limsup\limits_{m\to\infty}[\beta_1(m-1)-\xi(m-1)\eta(m-1)]v_{x_*}(m-1)\\
    \leq\liminf\limits_{\kappa\to\infty}[\beta_1(\kappa-1)-\xi(\kappa-1)\eta(\kappa-1)]v_{x_*}(\kappa-1),
\end{gather*}
which gives 
\begin{gather*}
\limsup\limits_{m\to\infty}[\beta_1(m-1)-\xi(m-1)\eta(m-1)]v_{x_*}(m-1)\\
=\liminf\limits_{\kappa\to\infty}[\beta_1(\kappa-1)-\xi(\kappa-1)\eta(\kappa-1)]v_{x_*}(\kappa-1).    
\end{gather*}
The equality above means that there exists
\begin{gather*}
\lim\limits_{m\to\infty}[\beta_1(m)-\xi(m)\eta(m)]v_{x_*}(m).   
\end{gather*}
Note that inequality \eqref{202402102229} shows that $\lim\limits_{m\to\infty}[\beta_1(m)-\xi(m)\eta(m)]>0$. From what have been proven, we get \eqref{2024020421086}.

With all preparation in place we can now prove the convergence of $z(n)$. Using \eqref{2024020416366} and the assumption that the operator $U$ is continuous, there is a constant $C_3>0$ subject to the following
\begin{gather*}
    \|U(z(n)+\eta(n)z^{\nabla}(n))\|\leq C_3\quad\forall n\in\Z_{\geq 0}.
\end{gather*}
Denote
\begin{gather*}
    C_4\triangleq\frac{1}{\max\{1,C_3\}}.
\end{gather*}
Use \eqref{202401310841a2} to estimate \eqref{2024020417246} as follows
\begin{gather*}
\nonumber\xi(n)\beta_0(n)^2+2a(n)+[\xi(n)\eta(n)(\eta(n)+1)-\beta_1(n)]c(n)-c^\dla(n)\\
\geq v_{x_*}^{\dla\nabla}(n)+[\beta_1(n)-\xi(n)\eta(n)]v_{x_*}^{\nabla}(n)\\
+2C_4\xi(n)\beta_0(n)\inner{U(z(n)+\eta(n)z^{\nabla}(n))}{z(n)+\eta(n)z^{\nabla}(n)-x_*}\\
\geq v_{x_*}^{\dla\nabla}(n)+[\beta_1(n)-\xi(n)\eta(n)]v_{x_*}(n)-[\beta_1(n-1)-\xi(n-1)\eta(n-1)]v_{x_*}(n-1)\\
+2C_4\xi(n)\beta_0(n)\inner{U(z(n)+\eta(n)z^{\nabla}(n))}{z(n)+\eta(n)z^{\nabla}(n)-x_*}\quad\text{(by \eqref{2024020420326}).}
\end{gather*}
By \eqref{202402092106}, we estimate
\begin{gather*}
\xi(n)\beta_0(n)^2+2a(n)-c^\dla(n)\\
\geq\xi(n)\beta_0(n)^2+2a(n)+[\xi(n)\eta(n)(\eta(n)+1)-\beta_1(n)]c(n)-c^\dla(n).
\end{gather*}
Thus,
\begin{gather*}
\xi(n)\beta_0(n)^2+2a(n)-c^\dla(n)\\
\geq v_{x_*}^{\dla\nabla}(n)+[\beta_1(n)-\xi(n)\eta(n)]v_{x_*}(n)-[\beta_1(n-1)-\xi(n-1)\eta(n-1)]v_{x_*}(n-1)\\
+2C_4\xi(n)\beta_0(n)\inner{U(z(n)+\eta(n)z^{\nabla}(n))}{z(n)+\eta(n)z^{\nabla}(n)-x_*}.
\end{gather*}
Summing the last inequality from $n=3$ to $n=m-1$, we get
\begin{gather*}
    \sum_{n=3}^{m-1}[\xi(n)\beta_0(n)^2+2a(n)]-c(m)+c(3)\\
    \geq v_{x_*}^{\nabla}(m)-v_{x_*}^{\nabla}(3)+[\beta_1(m-1)-\xi(m-1)\eta(m-1)]v_{x_*}(m-1)-[\beta_1(2)-\xi(2)\eta(2)]v_{x_*}(2)\\
    +2C_4\sum_{n=3}^{m-1}\xi(n)\beta_0(n)\inner{U(z(n)+\eta(n)z^{\nabla}(n))}{z(n)+\eta(n)z^{\nabla}(n)-x_*},
\end{gather*}
which yields, by \eqref{202402041047} and \eqref{2024020409376}, that
\begin{gather*}
    \sum_{n=1}^\infty\xi(n)\beta_0(n)\inner{U(z(n)+\eta(n)z^{\nabla}(n))}{z(n)+\eta(n)z^{\nabla}(n)-x_*}<\infty.
\end{gather*}
Condition \eqref{202402042040} and the line above give
\begin{gather*}
    \inf\{\inner{U(z(n)+\eta(n)z^{\nabla}(n))}{z(n)+\eta(n)z^{\nabla}(n)-x_*}:n\in\Z_{\geq 0}\}=0.
\end{gather*}
Consequently, one can find $(n_k)$ subject to the following
\begin{gather}\label{202402042101}
    \lim\limits_{k\to\infty}\inner{U(z(n_k)+\eta(n_k)z^{\nabla}(n_k))}{z(n_k)+\eta(n_k)z^{\nabla}(n_k)-x_*}=0.
\end{gather}
By \eqref{2024020416366}, there is a convergent subsequence of $(z_{n_k})$, say $(z_{n_{k_j}})$. Denote
\begin{gather*}
    z_{\odot}\triangleq\lim\limits_{j\to\infty}z_{n_{k_j}}.
\end{gather*}
Letting $j\to\infty$ in \eqref{202402042101}, we get
\begin{gather*}
\inner{U(z_{\odot})}{z_{\odot}-x_*}=0,    
\end{gather*}
which implies, by Lemma \ref{lem-tap}, that $z_{\odot}\in\tap.$ Now we apply \eqref{2024020421086} to the case when $x_*=z_{\odot}$ and so the proof is complete.
\end{proof}

\subsection{Parameters choices}
We give examples illustrating Assumption \ref{asu-dis-tim}. Let $p,q,\delta$, $\theta,$ $\lambda,\omega$ be the parameters satisfying conditions \eqref{202402131042}, \eqref{202402131043}, \eqref{202402131044}, \eqref{202402131045}, \eqref{202402131046}, \eqref{202402131047}, respectively.
\begin{gather}
    \label{202402131042}0<p<1,\\
    \label{202402131043}\frac{1}{2}(1-p)<q\leq 1-p,\\
    \label{202402131044}\delta>0,\\
    \label{202402131045}\theta>0,\\
    \label{202402131046}\lambda>0,\\
    \label{202402131047}\omega>\max\{e^{\frac{\ln(\delta+1)}{p}},e^{\frac{\ln\theta}{\lambda}}\}.
\end{gather}

\begin{prop} \label{parameters}
 Consider the difference equation \eqref{202402061434}, where the coefficients are of the following forms
 \begin{gather*}
 \begin{cases}
\beta_0(n)=\frac{1}{(n+\omega)^q},\quad\beta_1(n)=1+\frac{\delta}{(n+\omega)^p},\\
\xi(n)=\frac{1}{(n+\omega)^p},\quad\eta(n)=\frac{-\theta}{(n+\omega)^{\lambda}}.
 \end{cases}
 \end{gather*}
Then $\lim\limits_{n\to\infty}z(n)=z_\star\in\tap$ if conditions \eqref{202402131042}-\eqref{202402131047} hold.
\end{prop}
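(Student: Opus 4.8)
The plan is to check that the displayed choices of $\beta_0,\beta_1,\xi,\eta$ fulfil every requirement of Assumption \ref{asu-dis-tim}; the conclusion is then immediate from the convergence theorem for \eqref{202402061434}. Most requirements are routine comparisons between powers of $(n+\omega)$. For \eqref{202402092106} I would note $\eta(n)=-\theta/(n+\omega)^\lambda<0$ and that $\eta(n)\geq-1$ amounts to $(n+\omega)^\lambda\geq\theta$, which holds because $\omega>e^{\ln\theta/\lambda}$ by \eqref{202402131047}. For \eqref{202402032026} I would compute $\beta_1(n)=1+\delta/(n+\omega)^p\geq1$ and $\beta_1(n)+\xi(n)=1+(\delta+1)/(n+\omega)^p$, whose bound by $2$ is equivalent to $(n+\omega)^p\geq\delta+1$, again from \eqref{202402131047}. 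For \eqref{202404021032} I would write $\beta_1(n)-\xi(n)\eta(n)=1+\delta/(n+\omega)^p+\theta/(n+\omega)^{p+\lambda}$, a sum of decreasing terms. The summability conditions are direct: $\xi(n)\beta_0(n)^2=(n+\omega)^{-(p+2q)}$ gives \eqref{202402041047} since $p+2q>1$ by \eqref{202402131043}, and $\xi(n)\beta_0(n)=(n+\omega)^{-(p+q)}$ gives \eqref{202402042040} since $p+q\leq1$.

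The substantive condition is \eqref{202402040925}, where the main difficulty lies. I would first compute
\begin{gather*}
[\beta_1(n)-1+\xi(n)]\,[\beta_1(n)-1+(\eta(n)^2+Q_1)\xi(n)]\\
=\frac{\delta+1}{(n+\omega)^p}\left[\frac{\delta+Q_1}{(n+\omega)^p}+\frac{\theta^2}{(n+\omega)^{p+2\lambda}}\right]
=\frac{(\delta+1)(\delta+Q_1)}{(n+\omega)^{2p}}+\frac{(\delta+1)\theta^2}{(n+\omega)^{2p+2\lambda}}.
\end{gather*}
All exponents in the denominators are positive, so the right-hand side is decreasing in $n$ and attains its supremum at $n=0$; it therefore suffices to choose $Q_1>0$ and $Q_2\in(0,1)$ so that the $n=0$ value is at most $1-Q_2$.

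The crux is to exhibit such a pair, and this calibration is the main obstacle. The key observation is that at $Q_1=0$ the $n=0$ value is already strictly below $1$: using $\omega^p>\delta+1$ and $\omega^\lambda>\theta$ from \eqref{202402131047}, I would estimate $\frac{(\delta+1)\delta}{\omega^{2p}}<\frac{\delta}{\delta+1}$ and $\frac{(\delta+1)\theta^2}{\omega^{2p+2\lambda}}<\frac{1}{\delta+1}$, whose two bounds sum to exactly $1$. Since the $n=0$ value depends continuously (indeed linearly) on $Q_1$, it stays strictly below $1$ for all sufficiently small $Q_1>0$; I would fix such a $Q_1$ and set $Q_2$ equal to $1$ minus this value, so that $Q_2\in(0,1)$ and \eqref{202402040925} holds for every $n\geq0$. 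The delicate point is precisely that the two strict bounds sum to exactly $1$, leaving just enough slack to absorb a positive $Q_1$ and to define a legitimate $Q_2$; this is what makes the prescribed threshold on $\omega$ in \eqref{202402131047} sufficient rather than needing a strictly larger one. With Assumption \ref{asu-dis-tim} fully verified, $\lim_{n\to\infty}z(n)=z_\star\in\tap$ follows.
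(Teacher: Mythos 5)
Your proposal is correct, and at the level of overall strategy it coincides with the paper's: verify Assumption \ref{asu-dis-tim} for the given coefficients and invoke the convergence theorem. The only real divergence is in how condition \eqref{202402040925} is handled. The paper simply bounds the first factor by $1$ (valid since $\beta_1(n)-1+\xi(n)=\tfrac{\delta+1}{(n+\omega)^p}\leq\tfrac{\delta+1}{\omega^p}<1$ by \eqref{202402131047}) and then exhibits the explicit constants $Q_1=1-(\theta/\omega^\lambda)^2$ and $Q_2=1-\tfrac{\delta+1}{\omega^p}$, for which the second factor evaluates to exactly $1-Q_2$ at its supremum; both constants lie in $(0,1)$ by \eqref{202402131047}. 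You instead keep the full product, expand it, and produce $Q_1,Q_2$ by a perturbation argument from the borderline case $Q_1=0$, where your two strict bounds sum to exactly $1$. Both arguments are sound; the paper's is shorter and constructive, while yours retains the quadratic smallness of the product in $\tfrac{1}{(n+\omega)^p}$ (which the paper discards) and, unlike the paper, explicitly checks the remaining conditions \eqref{202402092106}--\eqref{202402032026}, all of which you verify correctly.
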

\begin{proof}
Condition \eqref{202402040925} holds with the choice
    \begin{gather*}
        Q_1\triangleq1-\left(\frac{\theta}{\omega^\lambda}\right)^2,\\
        Q_2\triangleq 1-\frac{\delta+1}{\omega^p}.
    \end{gather*}
Indeed,
\begin{gather*}
[\beta_1(n)-1+\xi(n)][\beta_1(n)-1+(\eta(n)^2+Q_1)\xi(n)]\\
\leq\beta_1(n)-1+(\eta(n)^2+Q_1)\xi(n)\\
=\frac{\delta}{(n+\omega)^p}+\frac{1}{(n+\omega)^p}\left(\frac{\theta^2}{(n+\omega)^{2\lambda}}+Q_1\right)\\
\leq\frac{\delta}{\omega^p}+\frac{1}{\omega^p}\left(\frac{\theta^2}{\omega^{2\lambda}}+Q_1\right)=1-Q_2.
\end{gather*}
\end{proof}

\section{Numerical experiments}\label{sec-exam}
In this section, we present some examples to illustrate the effectiveness of our algorithms. These examples were tested using Matlab software, running on a PC with CPU i5 10400 and 16Gb RAM.

Consider the problem \ref{proVI} with 
$$
\Omega=\{ x\in \mathbb R^3: \|x\| \leq 1  \}
$$
and $U: \mathbb R^3 \to \mathbb R^3$, $U(x)=Ax$ for all $x \in \mathbb R^3$, where
$
A= \begin{pmatrix}
1&-2&1\\
3&1&3\\
1&-2&1\\
\end{pmatrix}.
$
It is easy seen that $U$ is paramonotone but is not strictly monotone. We 
applied Algorithm \eqref{202401310822} and \eqref{202402061434}   to solve \ref{proVI}. 
\begin{itemize}
    \item  In Algorithm \eqref{202401310822}, we choose the parameters as in Proposition \ref{pp1}. We fix $u=1, s=0.35, q=0.71$ and adjust the parameter $h$. Choose $x(0)=(1; 0; 0)^T$ and $x'(0)= (-0.75;0.75;0)^T$. The results are presented in Figure  \ref{lientuc}.
    \item In Algorithm \eqref{202402061434}, we choose the parameters as in Proposition \ref{parameters} with different $p, q, \delta,\theta, \lambda$. Set $\omega=\max\left\{e^{\frac{\ln(\delta+1)}{p}},e^{\frac{\ln\theta}{\lambda}}\right\}+1$ and   $z(0)=(1;0;0)^T$, $z(1)=(0;1;0)^T$. The results are presented in Figure  \ref{viduroirac}.
\end{itemize}
\begin{figure}
    \centering
    \includegraphics[width=0.9\linewidth]{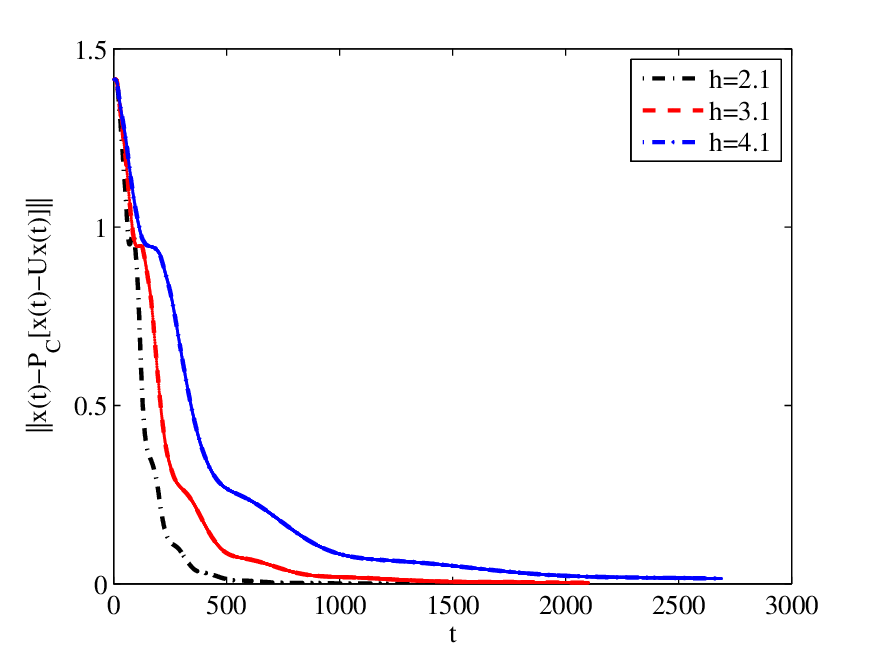}
    \caption{Performance of Algorithm \eqref{202401310822} with different parameter}
    \label{lientuc}
\end{figure} 

\begin{figure}
    \centering
    \includegraphics[width=0.9\linewidth]{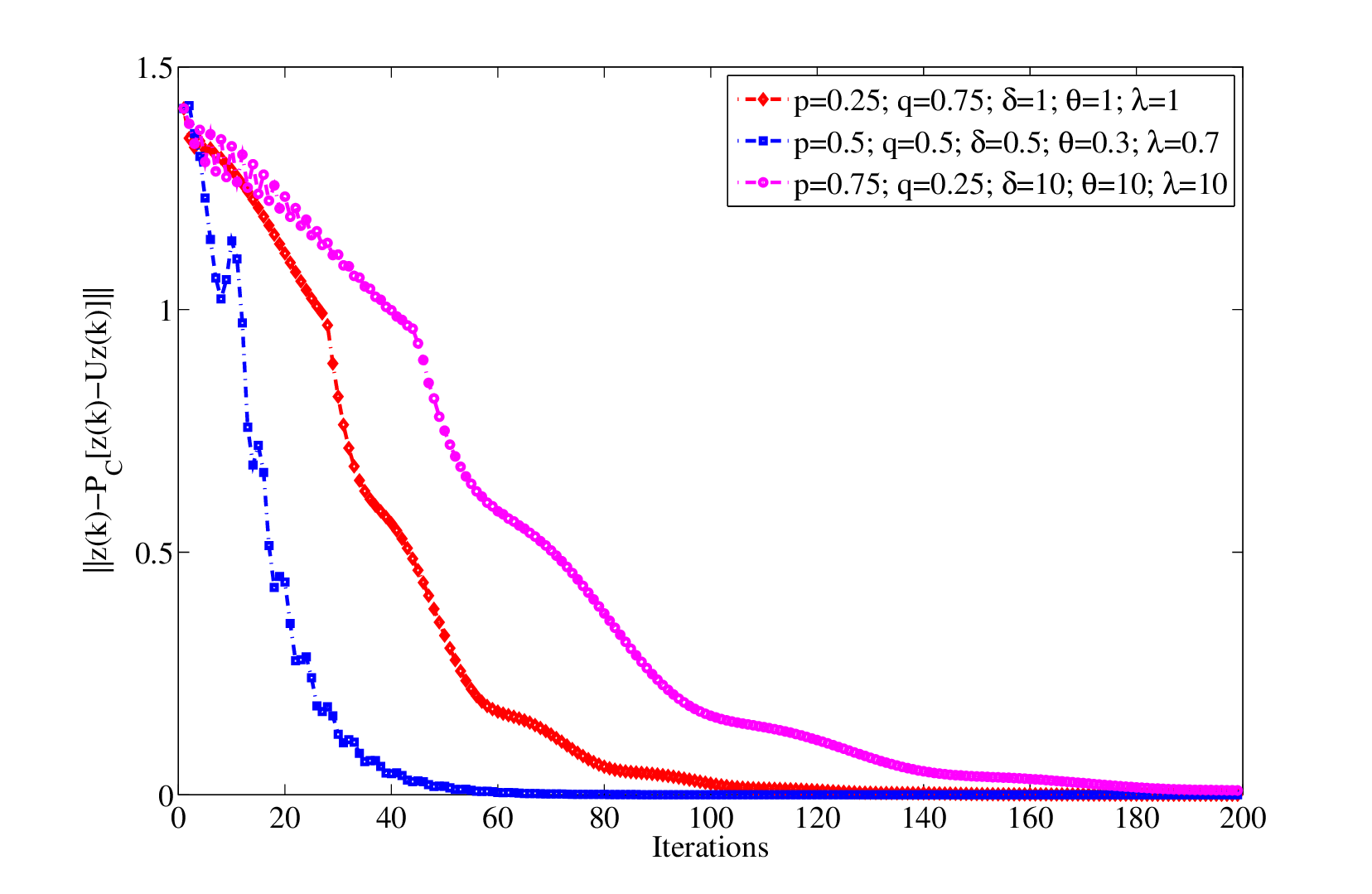}
    \caption{Performance of Algorithm \eqref{202402061434} with different parameters}
    \label{viduroirac}
\end{figure}
Next, we compare our algorithm \eqref{202402061434}  with the Direct method (\texttt{Dir.Method}) (Algorithm 1 in \cite{BelloIusem}) 
\begin{equation}\label{DR}
    \begin{cases}
        x(0) \in \Omega,\\
        x(n+1)=P_\Omega\left( x(n) -\frac{\beta_0(n)}{\max\{1; \| U(x(n)) \|  \}}U(x(n)) \right).\tag{\texttt{Dir.Method}}
    \end{cases}
\end{equation}
It was proven in \cite{BelloIusem} that the trajectory generated by \eqref{DR} converges to an element $z_\star\in\tap$ under the assumption that
\begin{gather}\label{20241024}
\sum\limits_{j=0}^\infty\beta_0(j)^2<\infty\quad\text{and}\quad\sum\limits_{j=0}^\infty\beta_0(j)=\infty.    
\end{gather}
Due to \eqref{20241024}, the \texttt{Dir.Method} uses the step sizes $ \beta_0(n)=\frac 1 {n^\tau} $, with $\tau \in (0.5;1]$. In our algorithm, the step size is taken as $\beta_0(n)=\frac{1}{(n+\omega)^q}$, where the range of the exponent $q$ is wider than $\tau$ of the \texttt{Dir.Method}; in details, $q\in(0;1)$. In the numerical examples, we fix $p=q=0.5$ and adjust the parameters $\delta, \theta, \lambda.$  Let  $\omega =\max\{(\delta+1)^{1/p},\theta^{1/\lambda}\}+1$. The comparison are presented in Figure \ref{fig1}. As we can see, in all cases the new algorithm is more efficient than the existing one. \\
\begin{figure}
    \centering
    \includegraphics[width=0.9\linewidth]{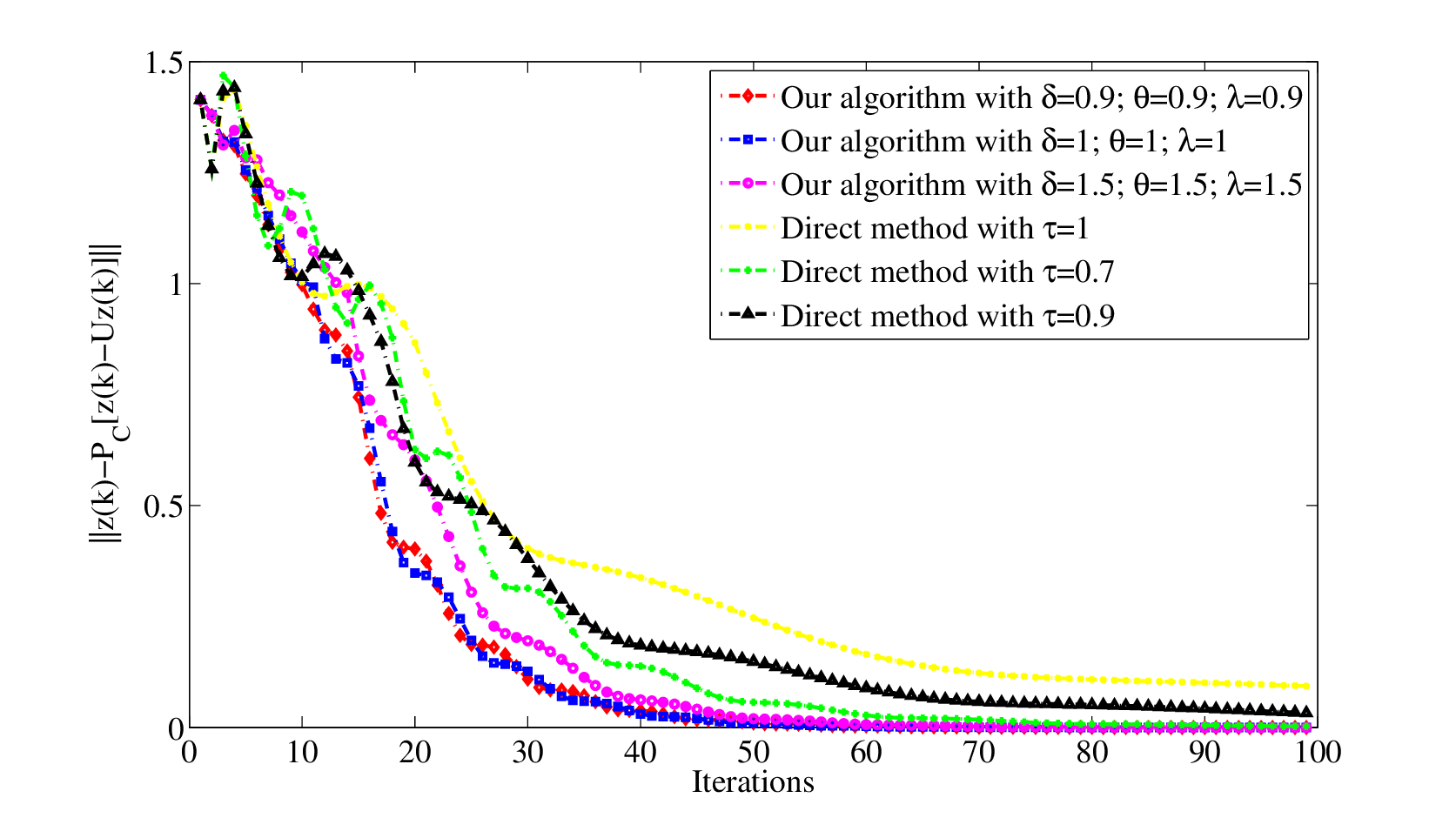}
    \caption{Comparison results of Algorithm \eqref{202402061434} and \texttt{Dir.Method}}
    \label{fig1}
\end{figure}

\section{Conclusion}
We have proposed a second order dynamical system for solving paramonotone VIs in a finite-dimensional space and studied its global convergence. A discretization of this dynamical system gives rise to an inertial iterative projection-type algorithm for which we establish the convergence of the iterations. Some numerical examples are given to confirm the theoretical results and illustrate the effectiveness of the proposed algorithms. The topic for future research includes second order dynamical systems enhanced by the vanishing damping coefficient for solving paramonotone VIs.

\section*{Data Availability}
Data sets generated during the current study are available from the corresponding author on reasonable request.

\section*{Competing interests}
The authors declare that they have no competing interests.

\nocite{*}

\begin{thebibliography}{99}
  \bibitem{Abbas}	B. Abbas, H. Attouch,  Benar F. Svaiter: Newton-like dynamics and forward-backward methods for  	structured monotone inclusions in Hilbert spaces. J. Optim. Theory Appl., 161, 331-360 (2014)
  \bibitem{Alecsa} C.D. Alecsa, S.C. Laszlo, T. Pinta: An Extension of the Second Order Dynamical System that Models Nesterov's Convex Gradient Method. Appl. Math Optim., 84, 1687-1716 (2021)

\bibitem{AnhVinhDOI} P.K. Anh, N.T. Vinh: Self-adaptive gradient projection algorithms for variational inequalities involving non-Lipschitz continuous operators. Numer. Algorithms, 81, 983-1001 (2019)

\bibitem{AA} H. Attouch, F. Alvarez: The heavy ball with friction dynamical system for convex constrained minimization problems, in: Optimization (Namur, 1998), Lecture Notes in Economics and Mathematical Systems 481, Springer, Berlin, 25-35, (2000)

\bibitem{ACR} H. Attouch, Z. Chbani,  H. Riahi: Combining fast inertial dynamics for convex optimization with Tikhonov regularization. J. Math. Anal. Appl., 457, 1065-1094 (2018)

\bibitem{zbMATH07194541} H. Attouch, Z. Chbani, H. Riahi: Rate of convergence of the Nesterov accelerated gradient method in the subcritical case $\alpha\leq 3$. ESAIM, Control Optim. Calc. Var., 25, 1292-8119 (2019)

\bibitem{Bao} T.Q. Bao, P.Q. Khanh: A projection-type algorithm for pseudomonotone nonlipschitzian multi-valued variational inequalities. Nonconvex Optim. Appl., 77, 113-129 (2005)

\bibitem{BelloIusem} J.Y. Bello Cruz, A.N. Iusem: Convergence of direct methods for paramonotone variational inequalities. Comput. Optim. Appl., 46, 247-263 (2010)
\bibitem{Bot} R. I. Bot, E. R. Csetnek: Second order forward-backward dynamical systems for monotone inclusion problems. SIAM J. Control Optim., 54, 1423-1443 (2016)
\bibitem{zbMATH06522738} R. I. Bot, E. R. Csetnek: Approaching the solving of constrained variational inequalities via penalty term-based dynamical systems. J. Math. Anal. Appl., 435, 1688-1700 (2016)
\bibitem{CensorGibali} Y. Censor, A. Gibali, S. Reich: The subgradient extragradient method for solving variational inequalities in Hilbert space. J. Optim. Theory and Appl., 148, 318-335 (2011)
\bibitem{zbMATH07344788} E. R. Csetnek: Continuous dynamics related to monotone inclusions and non-smooth optimization problems. Set-Valued Var. Anal., 28(4), 611-642 (2020)
\bibitem{Cruz} J.Y. Bello Cruz, R.D. Mill\'{a}n: A direct splitting method for nonsmooth variational inequalities. J. Optim. Theory Appl. 161, 729-737 (2014)
\bibitem{Facchinei} F. Facchinei, J.-S. Pang: Finite-dimensional variational inequalities and complementarity problems. Springer, New York (2003)
\bibitem{HaiOPTL}T.N. Hai: On gradient projection methods for strongly pseudomonotone variational inequalities without Lipschitz continuity.  Optim. Lett. 14, 1177-1191 (2020)
\bibitem{HaiDS} T.N. Hai: Dynamical systems for solving variational inequalities. J. Dyn. Control Syst., 28(4), 681-696, (2022)
\bibitem{Iiduka1}H. Iiduka: Fixed point optimization algorithm and its application to power control in CDMA data networks. Math. Program. 133, 227-242 (2012)
\bibitem{Iiduka2}H. Iiduka, I. Yamada: An ergodic algorithm for the power-control games for CDMA data networks. J. Math. Model. Algorithms 8, 1-18 (2009)
\bibitem{KhanhVuong} P.D. Khanh, P.T. Vuong: Modified projection method for strongly pseudomonotone variational inequalities. J. Global Optim. 58, 341-350 (2014)
\bibitem{Kinderlehrer} D. Kinderlehrer, G. Stampacchia: An introduction to variational inequalities and their applications. Academic Press, New York (1980)
\bibitem{MalitskySemenov} Y.V. Malitsky, V.V. Semenov: An extragradient algorithm for monotone variational inequalities,. Cybernetics and Systems Anal., 50, 271-277 (2014)
\bibitem{Polyak} B.T. Polyak: Some methods of speeding up the convergence of iteration methods. U.S.S.R. Comput. Math. Math. Phys. 4,  1-17 (1964)
\bibitem{Quoc1} T.D. Quoc, L.D. Muu, V.H. Nguyen: Extragradient algorithms extended to equilibrium problems. Optimization 57, 749-776 (2008)
\bibitem{SolodovSvaiter}M.V. Solodov, B.F. Svaiter: A new projection method for monotone variational inequality problems. SIAM J. Control Optim. 37, 765-776 (1999)
\bibitem{VuongSIAM} P.T. Vuong: A second order dynamical system and its discretization for strongly pseudo-monotone variational inequalities. SIAM J. Control Optim., 59(4), 2875-2897 (2021)
\end{thebibliography}

\end{document}